\newtheorem{thm}{Theorem}[section]
\newtheorem{prop}[thm]{Proposition}
\newtheorem{lem}[thm]{Lemma}
\def\Zp{\mathbb{Z}_{p}}
\def\Qp{\mathbb{Q}_{p}}
\def\Zm{\mathcal{Z}_{\widehat{\mu_\Lambda}}}
\def\Q{\mathbb{Q}}
\def\Z{\mathbb{Z}}
\def\ZE{\mathcal{Z}_{\widehat{\mu_E}}}
\def\m{\mathfrak{m}}
\def\Card{{\rm Card}}
\begin{document}
	
\title[Fuglede's conjecture holds in $\Qp$]{Fuglede's conjecture holds in $\Qp$ }

\author
{Aihua Fan}
\address
{
 LAMFA, UMR 7352 CNRS, Universit\'e de Picardie,
33 rue Saint Leu, 80039 Amiens, France}
\email{ai-hua.fan@u-picardie.fr}

\author
{Shilei Fan}
\address
{School of Mathematics and Statistics \& Hubei Key Laboratory of Mathematical Sciences, Central  China Normal University, Wuhan, 430079,   China }
\email{slfan@mail.ccnu.edu.cn}

\author{Lingmin Liao}
\address{LAMA, UMR 8050, CNRS,
Universit\'e Paris-Est Cr\'eteil, 61 Avenue du
G\'en\'eral de Gaulle, 94010 Cr\'eteil Cedex, France}
\email{lingmin.liao@u-pec.fr}

\author
{Ruxi Shi}
\address
{LAMFA, UMR 7352 CNRS, Universit\'e de Picardie,
33 rue Saint Leu, 80039 Amiens, France}
\email{ruxi.shi@u-picardie.fr}

\thanks{A. H. FAN was supported by NSF of China (Grant No. 11471132) and self-determined research funds of CCNU (Grant No. CCNU14Z01002); S. L. FAN was supported by NSF of China (Grant Nos. 11401236 and 11231009)}

\begin{abstract}
Fuglede's conjecture in  $\Q_p$ is proved. That is to say, a Borel set of positive and finite Haar measure in $\Qp$ is a spectral set if and only if it tiles $\Q_p$ by translation. 
\end{abstract}
\subjclass[2010]{Primary 43A99; Secondary 05B45, 26E30}
\keywords{$p$-adic field, spectral set, tiling, Fuglede's conjecture}

\maketitle

\section{Introduction}
Let $G$ be a locally compact abelian group and $\widehat{G}$ be its dual group. Denote by $\mathfrak{m}$ (sometimes by $dx$) the Haar measure on $G$.  Consider a Borel measurable subset $\Omega$ in $G$ with $0<\mathfrak{m}(\Omega)<\infty$.
We say that $\Omega$ is a {\em spectral set} if there exists a set $\Lambda \subset \widehat{G}$ which is
an orthonormal basis of the Hilbert space $L^2(\Omega)$. Such a set $\Lambda$ is called a {\em spectrum} of $\Omega$ and $(\Omega,\Lambda)$ is called a {\em spectral pair}.
A set $\Omega$ is said to be a
{\em  tile}  of $G$ by translation if there exists a  set $T \subset G$ of translates
such that $\sum_{t\in T} 1_\Omega(x-t) =1$ for almost all $x\in G$, where $1_A$ denotes the indicator function of a set $A$. The set $T$ is  then called a
{\em tiling complement} of $\Omega$ and $(\Omega,T)$ is called a {\em tiling pair}. 

When $G=\mathbb{R}^n$, Fuglede \cite{F}  formulated the following conjecture: 
{\em 	A Borel set $\Omega\subset \mathbb{R}^n$ of positive and finite Lebesgue measure is a spectral set if and only if it is a tile.}
We could formulate a general Fuglede's conjecture for a locally compact abelian group $G$, simply called Fuglede's conjecture in $G$: {\em 
	A Borel set $\Omega\subset G$ of positive and finite Haar measure is a spectral set if and only if it is a tile.
} In its generality, this generalized conjecture is far from
proved. We could rather ask for which groups it holds.  The question arises even for finite groups.  
\medskip

Let $p\ge 2$ be prime and $\mathbb{Q}_p$ be the field of $p$-adic numbers.   
In the present paper, we will prove the following theorem
that  Fuglede's conjecture in $\mathbb{Q}_p$ holds.

\begin{thm}\label{mainthm}
	Assume that  $\Omega\subset \Qp$ is a  Borel set of positive and finite Haar measure. Then $\Omega$ is a spectral set if and only if it is a tile of $\Qp$. In this case,  $\Omega$ must be an almost compact open set.
\end{thm}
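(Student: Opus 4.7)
The plan is to reduce Fuglede's conjecture in $\Qp$ to its finite analog on cyclic $p$-groups $\Z/p^n\Z$, where it is known to hold. The reduction proceeds in three stages.

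First, I would prove a rigidity theorem: any spectral set or tile $\Omega \subset \Qp$ must be almost compact open, and $\m(\Omega)$ must be a power of $p$. For a spectral pair $(\Omega,\Lambda)$, orthogonality of the characters $\{e_\lambda\}_{\lambda\in\Lambda}$ in $L^2(\Omega)$ forces $(\Lambda-\Lambda)\setminus\{0\}$ to lie in the zero set of $\widehat{\mathbf{1}_\Omega}$. Exploiting the ultrametric structure of $\Qp$ and the fact that Fourier transforms of integrable functions on $\Qp$ are locally constant, one can show that these zero-set constraints compel $\Omega$ to agree, up to measure zero, with a finite union of $p$-adic balls. The tiling direction is analogous, using the Fourier characterization $\widehat{\mathbf{1}_\Omega}(\xi)\cdot\widehat{\mu_T}(\xi)=0$ for $\xi\neq 0$, together with a direct argument that a tiling complement must be uniformly discrete in the $p$-adic metric.

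Second, by translation and dilation by a suitable power of $p$ (operations that preserve both spectrality and the tiling property), I may assume $\Omega \subset \Zp$ is a union of cosets of $p^n\Zp$ for some $n \geq 1$. Then $\Omega$ is the preimage of a finite set $A \subset \Z/p^n\Z$ under the natural projection $\Zp \to \Z/p^n\Z$. Third, I would verify the clean correspondence: $\Omega$ is spectral in $\Qp$ (respectively, tiles $\Qp$) if and only if $A$ is spectral in $\Z/p^n\Z$ (respectively, tiles $\Z/p^n\Z$). Via the self-duality $\widehat{\Qp}\simeq\Qp$, the characters trivial on $p^n\Zp$ correspond precisely to the characters of $\Z/p^n\Z$, and the normalized Haar measure restricted to $\Omega$ matches counting measure on $A$ up to a constant, so the Fourier-analytic translation is straightforward. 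One then invokes Fuglede's conjecture on $\Z/p^n\Z$ to conclude.

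The hard part will be Stage~1. The $p$-adic setting is favorable because the ultrametric topology makes zero sets of Fourier transforms highly structured---essentially unions of cosets of compact open subgroups---in sharp contrast to the $\mathbb{R}^n$ case where high-dimensional counterexamples exist. Still, converting this structural rigidity into genuine compactness and rationality of $\m(\Omega)$, and handling both directions (spectral $\Rightarrow$ compact open, and tile $\Rightarrow$ compact open) uniformly, is the main technical content. Once Stage~1 is in place, the remaining discretization in Stages~2--3 is essentially formal, and the combinatorial heart of the result is subsumed into the known finite case.
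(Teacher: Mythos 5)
Your overall architecture (rigidity to almost compact open sets, then discretization to $\mathbb{Z}/p^n\mathbb{Z}$) works for the direction ``tile $\Rightarrow$ spectral'', and there it essentially matches the paper: the paper shows that $\mu_T*1_\Omega=1$ forces $\widehat{1_\Omega}$ to have compact support (via the boundedness of $\mathcal{Z}_{\widehat{\mu_T}}$ and a distribution-theoretic product argument), hence $1_\Omega$ is uniformly locally constant and $\Omega$ is almost compact open, after which the compact open case of \cite{FFS} finishes. But your Stage~1 for the \emph{spectral} direction has a genuine gap. The functional equation coming from spectrality is $\mu_\Lambda * f = 1$ with $f=|\widehat{1_\Omega}|^2/\m(\Omega)^2$, so the compact-support conclusion applies to $\widehat{f}=1_\Omega*1_{-\Omega}$, not to $\widehat{1_\Omega}$ itself. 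This yields only that $\Omega-\Omega$ is bounded, i.e.\ that $\Omega$ is bounded up to measure zero; it does not yield local constancy of $1_\Omega$. The orthogonality constraint $(\Lambda-\Lambda)\setminus\{0\}\subset\mathcal{Z}_{\widehat{1_\Omega}}$ restricts $\Lambda$, not $\Omega$, and no amount of ``ultrametric structure of zero sets'' is known to convert it directly into the statement that $\Omega$ is a finite union of balls. The paper explicitly declines to prove that spectral sets are almost compact open at this stage; that fact is only obtained \emph{a posteriori}, as a corollary of ``spectral $\Rightarrow$ tile $\Rightarrow$ compact open''. So your reduction is circular for the spectral direction: you need compact openness to discretize, but compact openness of spectral sets is (as far as anyone knows) only accessible once you already know they tile.

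What the paper does instead for ``spectral $\Rightarrow$ tile'' is a direct construction that uses only boundedness. Writing $n_f$ for the largest ball $B(0,p^{-n_f})$ on which $\widehat{f}$ is nonvanishing, one classifies the spheres $S(0,p^{-n})$, $0\le n<n_f$, into those contained in $\mathcal{Z}_{\widehat{\mu_\Lambda}}$ (index set $\mathbb{I}$) and those disjoint from it (index set $\mathbb{J}$); the set $U=\{\sum_{j\in\mathbb{J}}\alpha_j p^j\}$ is then shown to be a tiling complement of $\Omega$ in $\mathbb{Z}_p$. Disjointness of the translates comes from $\widehat{f}(\xi)=\m(\Omega\cap(\Omega+\xi))=0$ on $\bigcup_{j\in\mathbb{J}}S(0,p^{-j})$, and the covering identity $\mathrm{Card}(U)\cdot\m(\Omega)=1$ comes from a counting argument on vanishing sums of $p^n$-th roots of unity ($p^{\mathrm{Card}(\mathbb{I})}\mid\mathrm{Card}(\Lambda\cap B(0,p^{n_f}))$) combined with the density formula $D(\Lambda)=\m(\Omega)$. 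If you want to salvage your plan, you should replace your Stage~1 claim for spectral sets by this weaker boundedness statement and supply a construction of the tiling complement along these lines; the reduction to $\mathbb{Z}/p^n\mathbb{Z}$ can then be reserved for the tiling direction only.
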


By {\em  almost compact open set} we mean a Borel set $\Omega\subset \Q_p$ such that there exists a compact open set $\Omega^{\prime}$ satisfying
$$\m(\Omega\setminus \Omega^{\prime})=\m(\Omega^{\prime}\setminus \Omega)=0.$$

In \cite{FFS}, the first assertion of Theorem \ref{mainthm} was proved under the additional assumption that $\Omega$ is a compact open set in $\mathbb{Q}_p$, and furthermore, the compact open spectral sets were characterized by their  $p$-homogeneity (see the definition of $p$-homogeneity in Section 1 of \cite{FFS}). 

\begin{thm}[\cite{FFS}, Theorem 1.1]\label{compactopen}
	Let $\Omega $ be a compact open set in $\mathbb{Q}_p$. The following
	statements  are equivalent: \\
	\indent {\rm (1)} \ $\Omega$ is a spectral set; \\
	\indent {\rm (2)} \ $\Omega$ tiles $\mathbb{Q}_p$;\\
	\indent {\rm (3)} \ $\Omega$ is $p$-homogenous.
\end{thm}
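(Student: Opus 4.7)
The plan is to prove $(1)\Leftrightarrow(2)\Leftrightarrow(3)$ by reducing $\Omega$ to a finite combinatorial object on the rooted $p$-ary tree of depth $N$, and then carrying out a level-by-level Fourier analysis.

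\textbf{Reduction.} A compact open subset of $\Qp$ is a finite disjoint union of balls. Since all three properties are preserved under translation and under multiplication by non-zero scalars of $\Qp$, I may assume $\Omega=\bigsqcup_{a\in A}(a+p^N\Zp)$ with $A\subset\{0,1,\ldots,p^N-1\}$ and $N$ minimal. The set $A$ then represents $\Omega$ as a set of leaves in the depth-$N$ rooted $p$-ary tree, and $p$-homogeneity is the condition that every internal vertex at depth $k$ of this subtree has the same out-degree $n_k$, for $0\le k\le N-1$.

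\textbf{$(3)\Rightarrow(1)$ and $(3)\Rightarrow(2)$.} Under $p$-homogeneity, the $p$-adic digit expansion yields a product decomposition $A=A_0+pA_1+\cdots+p^{N-1}A_{N-1}$ with each $A_k\subset\{0,\ldots,p-1\}$ of size $n_k$. Since $p$ is prime, the definition forces each $n_k\in\{1,p\}$, and for such sizes every $A_k$ admits an evident spectrum $B_k$ and an evident tiling complement $C_k$ inside $\Z/p\Z$. Assembling these level-by-level produces an explicit spectrum $\Lambda:=\sum_{k=0}^{N-1}p^{-k-1}B_k$, together with a set of coset representatives of $p^{-N}\Zp$ in $\Qp$, and an analogous tiling complement built from the $C_k$. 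Orthogonality in $L^2(\Omega)$ and non-overlap of translates then follow level-by-level from the product structure of $A$ and the standard additive character of $\Qp$.

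\textbf{$(1)\Rightarrow(3)$ and $(2)\Rightarrow(3)$.} Let $\chi$ denote the standard additive character of $\Qp$. A direct computation gives
\[
\widehat{1_\Omega}(\xi)=p^{-N}\,1_{p^{-N}\Zp}(\xi)\cdot m_A(\xi),\qquad m_A(\xi)=\sum_{a\in A}\chi(-a\xi),
\]
so $\widehat{1_\Omega}$ is supported in $p^{-N}\Zp$, and on the annular shell $\{|\xi|_p=p^{k-N}\}$ the vanishing of $m_A$ depends only on the reduction of $A$ modulo $p^k$. The spectral condition that $(\Lambda-\Lambda)\setminus\{0\}$ lies in the zero set of $\widehat{1_\Omega}$, and the tiling condition reformulated in the Fourier dual as an analogous zero-pattern constraint on $T$, each translate into constraints on the residues of $A$ modulo $p^k$. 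A short induction on $k=1,2,\ldots,N$ then shows in either case that the number of elements of $A$ in each residue class modulo $p^k$ must be independent of the class, which is exactly the uniform-branching condition at depth $k$, i.e.\ $p$-homogeneity.

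\textbf{Main obstacle.} The induction is the delicate heart of the argument. At each level the reasoning must leverage the prime-field fact that a proper non-empty subset of $\Z/p\Z$ has at least one non-trivial character on which its mask does not vanish, in order to force the branching number into $\{1,p\}$; simultaneously one must control how the vanishing patterns of $m_A$ on the different annular shells of $\Qp$ interact. A secondary difficulty is that the spectrum $\Lambda$ or the tiling complement $T$ is a priori only a Borel subset of $\Qp$ with no regularity hypothesis, so all structural information about $A$ must be extracted from the Fourier-side identity, with the spectral and tiling cases handled carefully in parallel.
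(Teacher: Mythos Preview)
This theorem is not proved in the present paper; it is quoted from \cite{FFS} (their Theorem~1.1) and used here only as a black box, notably in the proof of Theorem~\ref{propmain}~(3). There is therefore no proof in this paper against which to compare your proposal.

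On the proposal itself: it is a reasonable outline of the strategy one expects in \cite{FFS}, but two points need care.

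In the direction $(3)\Rightarrow(1),(2)$ you assert that $p$-homogeneity yields a product decomposition $A=A_0+pA_1+\cdots+p^{N-1}A_{N-1}$ and that primality of $p$ forces each $n_k\in\{1,p\}$. Neither follows from the bare statement that all vertices at a given depth have the same out-degree: equal out-degree does not force the children of different parents to occupy the \emph{same} set of residues modulo $p$, so the product structure is not automatic; and nothing about $p$ being prime restricts an out-degree $n_k\in\{1,\ldots,p\}$ to the endpoints. Whether these claims hold depends on the precise definition of $p$-homogeneity given in \cite{FFS}, which is not reproduced here; you should consult it before relying on either assertion.

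In the direction $(1),(2)\Rightarrow(3)$ you correctly identify the level-by-level induction as the heart of the matter, but your proposal stops at naming it. The key algebraic input, visible in the present paper's \S\ref{Zmodule}, is the structure of the $\Z$-module of integer relations among $p^n$-th roots of unity (Lemma~\ref{SchLemma} and its corollaries, Lemmas~\ref{permu}--\ref{zmod}); this is what forces uniform branching at each level. Your ``prime-field fact'' is essentially the $n=1$ case of this, but the substantive work is propagating it through the levels while tracking how the mask $m_A$ behaves on each annulus, and that part is not carried out in your sketch.
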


Fuglede \cite{F} proved the conjecture in $\mathbb{R}^d$ under the extra assumption that the spectrum or the tiling complement is a lattice of $\mathbb{R}^d$. There are many positive results under different extra assumptions before the work \cite{TT} where  Tao gave a counterexample: there exists a spectral subset of $\mathbb{R}^d$ with $d\ge 5$ which is not a tile. After that, Matolcsi \cite{M}, Matolcsi and Kolountzakis \cite{KM,KM2}, Farkas and Gy \cite{FG}, Farkas, Matolcsi and   M{\'o}ra \cite{FMM06} gave a series of counterexamples  which show that
both directions of Fuglede's conjecture fail in $\mathbb{R}^d( d\geq 3)$.  
However,  the conjecture  is still open in low dimensions $d=1,2$.


As pointed out in Proposition 3.1 of \cite{Fan},  which follows \cite{JP1998}, that  $(\Omega,\Lambda)$ is a spectral pair in $\Q_p$   is equivalent to
\begin{equation}\label{SpectralSet}
  \forall  \xi\in \widehat{\Q}_p,   \qquad \sum_{\lambda \in \Lambda } |\widehat{1_{\Omega}}|^2(\xi -\lambda) = \m(\Omega)^2,
\end{equation}
where $\widehat{1_{\Omega}}$ is the Fourier transform of $1_{\Omega}$. By definition, $(\Omega,T)$ is a tiling pair of $\Q_p$ means that
\begin{equation}\label{Tile}
\sum_{t \in T } 1_{\Omega}(x -t) = 1, \quad \m\text{-a.e. } x\in \Qp
\end{equation}

A subset $E$ of $\Q_p$ is said to be {\em uniformly discrete} if $E$ is countable and $\inf_{x,y \in E} |x-y|_p>0$, where $|\cdot|_p$ denotes the $p$-adic absolute value on $\Q_p$. Remark that if $E$ is uniformly discrete, then  ${\rm Card}(E\cap K)<\infty$  for any compact subset $K$ of $\Q_p$ so that  
 \begin{equation}\label{mu_E}
 	\mu_E = \sum_{\lambda\in E} \delta_\lambda
 \end{equation}
 defines a  discrete
 Radon measure.  Observe that both (\ref{SpectralSet}) and (\ref{Tile})
are of the form 
\begin{equation}\label{EQ1}
\mu_E * f =w, 
\end{equation}where $f$ is a non-negative integrable function and $w>0$ is a positive number. Actually,
both spectrum $\Lambda$ and  tiling complement $T$ in $\Q_p$ are  uniformly discrete (see Proposition \ref{uniformdiscrete}).
The above convolution equation (\ref{EQ1})
will be our main concern. 
\medskip

Our proof of Theorem \ref{mainthm} will be based on   the analysis of the set of zeros  of the Fourier transforms $\widehat{\mu_{\Lambda}}$ and  $\widehat{\mu_{T}}$, where both $\mu_\Lambda$ and $\mu_T$ are considered as Bruhat-Schwartz distributions and their Fourier transforms
$\widehat{\mu_{\Lambda}}$ and  $\widehat{\mu_{T}}$ are also Bruhat-Schwartz distributions.

Actually, we will first  prove that  tiles of  $\Q_p$ are almost compact open sets.
Then the direction  ``tiling $\Rightarrow$ spectral" of Theorem \ref{mainthm} follows from Theorem \ref{compactopen}. Next we will  directly prove ``spectral $\Rightarrow$ tiling".  Consequently,  spectral sets and tiles are all almost compact open. 

In \cite{FF}, it is proved that any {\em bounded} tile of $\Q_p$ is almost  compact open. The proof there is different from the one in the present paper.  Without using the theory
of distribution, that proof is more direct and easily understandable. But the boundedness is assumed as an extra condition.  

The article is organized  as follows. In Section 2, we present preliminaries on the field $\Q_p$ of $p$-adic numbers and on the $\mathbb{Z}$-module generated by the $p^n$-th roots of unity.  Some useful facts from  the theory of Bruhat-Schwartz  distributions are also presented. It is proved that a particular structure is shared by the set of zeros of $\widehat{\mu_E}$ for all uniform discrete sets $E$.    In Section 3, we study  the functional equation $\mu_E * f = 1$ under the assumption that $E$ is uniformly discrete and $f$ is a non null and non-negative
integrable function. Such a relation has strong constraints on $f$ and on $E$. For example, we prove that $\widehat{f}$ has compact support. Section 4 is devoted to the proof that tiles in $\Q_p$ are all almost compact open and spectral sets in $\Q_p$ are all almost bounded. It is then proved  that tiles in $\Q_p$ are spectral sets. The proof that spectral sets in $\Q_p$ are tiles is given in Section 5.   In the last section, we show the dual  Fuglede's conjecture  in $\Q_p$ and  give some remarks on the case of higher dimensions.

\setcounter{equation}{0}

\section{Preliminaries}

Our study on the Fuglede's conjecture on $\Q_p$ is strongly related to the following functional equation
\begin{equation}\label{F-equation}
\mu_E * f = 1
\end{equation}
where $E\subset \Q_p$ is a uniformly discrete set and $f \in L^1(\Q_p)$ is a non-negative integrable function such that $\int_{\Q_p}f d\m>0$. 

The convolution in (\ref{F-equation})  is understood as a convolution of Bruhat-Schwartz distributions and even as a convolution in the Colombeau algebra of generalized functions. One of reasons is that the Fourier transform of the infinite Radon measure $\mu_E$ is not defined for the measure $\mu_E$ but for the distribution $\mu_E$. Both spectral set and tile are characterized by special cases of the  equation
 (\ref{F-equation}).
In fact, (\ref{SpectralSet})   means that a spectral pair $(\Omega, \Lambda)$ is characterized by (\ref{F-equation})
with $f=\m(\Omega)^{-2} |\widehat{1_{\Omega}}|^2$ and $E=\Lambda$; 
(\ref{Tile})  means that a tiling pair 
$(\Omega, T)$ is characterized by (\ref{F-equation})
with $f=1_{\Omega}$ and $E=T$.

In this section, after having presented some basic facts like the Lebesgue density theorem, the uniform discreteness of spectrum and of tiling complement etc,  we will recall some result from \cite{s} on  the $\mathbb{Z}$-module generated by $p^n$-th roots of unity,  which is a key for our study.
 Then we will present some useful facts from  the theory of Bruhat-Schwartz
distributions and from the theory of the Colombeau algebra of generalized functions.   At the end, we will investigate the set of zeros
of the Fourier transform $\widehat{\mu_E}$.

\subsection{The field of $p$-adic number $\Qp$}\label{p-adicfield}

We start with a quick recall of $p$-adic numbers. 
Consider the field $\mathbb{Q}$ of rational numbers and a prime $p\ge 2$.
Any nonzero number $r\in \mathbb{Q}$ can be written as
$r =p^v \frac{a}{b}$ where $v, a, b\in \mathbb{Z}$ and $(p, a)=1$ and $(p, b)=1$
(here $(x, y)$ denotes the greatest common divisor of the two integers $x$ and $y$). 
 We define 
$|r|_p = p^{-v_p(r)}$ for $r\not=0$ and $|0|_p=0$.
Then $|\cdot|_p$ is a non-Archimedean absolute value. That means\\
\indent (i)  \ \ $|r|_p\ge 0$ with equality only when $r=0$; \\
\indent (ii) \ $|r s|_p=|r|_p |s|_p$;\\
\indent (iii) $|r+s|_p\le \max\{ |r|_p, |s|_p\}$.\\
The field $\mathbb{Q}_p$ of $p$-adic numbers is the completion of $\mathbb{Q}$ under
$|\cdot|_p$. The ring $\mathbb{Z}_p$ of $p$-adic integers is the set of $p$-adic numbers with absolute value $\leq 1$. A typical element $x$ of $\mathbb{Q}_p$ is of the form
\begin{equation}\label{HenselExp}
x= \sum_{n= v}^\infty a_n p^{n} \qquad (v\in \mathbb{Z}, a_n \in \{0,1,\cdots, p-1\} \text{ and } a_v\neq 0). 
\end{equation}
Here, $v_p(x):=v$ is called the $p$-{\em valuation} of $x$. 

 A non-trivial additive character on $\mathbb{Q}_p$ is defined by
$$
\chi(x) = e^{2\pi i \{x\}}
$$
where $\{x\}= \sum_{n=v_p(x)}^{-1} a_n p^n$ is the fractional part of $x$ in (\ref{HenselExp}). From this character we can get all characters $\chi_\xi$ of $\mathbb{Q}_p$, by defining 
$\chi_\xi(x) =\chi(\xi x)$. We remark that 
\begin{align}\label{one-in-unit-ball}
\chi(x)=e^{2\pi i k/p^n}, \quad  \text{  if }  x \in \frac{k}{p^n}+\mathbb{Z}_p  \ \  (k, n \in \Z), 
\end{align}
and
\begin{align}\label{integral-chi}
\int_{p^{-n}\mathbb{Z}_p} \chi(x)dx=0 \ \text{ for all } n\geq 1.
\end{align}

The map $\xi \mapsto \chi_{\xi}$ from $\Q_p$ to $\widehat{\Q}_p$ is an isomorphism. We write $\widehat{\Q}_p\simeq \Qp$ and identify a point $\xi \in \Q_p$ with the point $\chi_\xi \in \widehat{\Q}_p$.  
For more information on $\mathbb{Q}_p$ and $\widehat{\Q}_p$, the reader is referred to the book \cite{Vvz}.

The following notation will be used in the whole paper.

\medskip
\noindent {\bf Notation}:
\begin{itemize}
\item $\mathbb{Z}_p^\times := \mathbb{Z}_p\setminus p\mathbb{Z}_p=\{x\in \mathbb{Q}_p: |x|_p=1\}$,
the group of units of $\mathbb{Z}_p$.


\item $B(0, p^{n}): = p^{-n} \mathbb{Z}_p$,  the (closed) ball centered at $0$ of radius $p^n$.

\item $B(x, p^{n}): = x + B(0, p^{n})$. 

\item $ S(x, p^{n}): = B(x, p^{n})\setminus B(x, p^{n-1})$,  a ``sphere".

\item $ \mathbb{L} : = \{\{x\}: x\in \Q_p\}$, a complete set of representatives of the cosets of the additive subgroup $\mathbb{Z}_p$.

\item $ \mathbb{L}_n : = p^{-n} \mathbb{L}$. 
\end{itemize}
\medskip

The Lebesgue density theorem   holds in $\Q_p$ for the Haar measure. 
 
\begin{prop}[\cite{PT}]\label{Lebesgue}
Let $\Omega\subset \Qp$ be a bounded Borel set such that  $\mathfrak{m}(\Omega)>0$. Then 
$$\lim_{n\to \infty}\frac{\mathfrak{m}(B(x,p^{-n})\cap \Omega) }{\mathfrak{m}(B(x, p^{-n}))}=1,  \quad  \m\hbox {-a.e. }  x \in \Omega.$$
\end{prop}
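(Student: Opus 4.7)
The plan is to exploit the ultrametric tree structure of $\Q_p$: for each $n\ge 0$, the balls $B(x,p^{-n})=x+p^n\Z_p$ partition $\Q_p$ into pairwise disjoint clopen pieces, and these partitions become finer as $n$ increases. This tree structure means a martingale convergence argument delivers the density statement almost for free, with no need for the Vitali or Hardy--Littlewood maximal machinery used over $\mathbb{R}^n$.

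First I would fix a compact ball $K\supset \Omega$, which exists because $\Omega$ is bounded, and work on the probability space obtained from the normalized restriction of $\m$ to $K$. Let $\mathcal{F}_n$ be the $\sigma$-algebra on $K$ generated by the (finite) partition of $K$ into balls of radius $p^{-n}$. Then $(\mathcal{F}_n)_{n\ge 0}$ is an increasing filtration, and the $\sigma$-algebra $\mathcal{F}_\infty$ it generates coincides with the Borel $\sigma$-algebra on $K$, since the balls $B(x,p^{-n})$ form a countable base of the topology of $K$.

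Next I would identify the conditional expectation. Because $f_n:=\mathbb{E}[1_\Omega\mid \mathcal{F}_n]$ is constant on each atom $B(x,p^{-n})\cap K$ of $\mathcal{F}_n$, one has
\begin{equation*}
f_n(x)=\frac{\m(B(x,p^{-n})\cap \Omega)}{\m(B(x,p^{-n}))}
\end{equation*}
for every $x\in K$. The martingale $(f_n)$ is uniformly bounded by $1$, so L\'evy's upward theorem yields $f_n\to \mathbb{E}[1_\Omega\mid \mathcal{F}_\infty]=1_\Omega$ $\m$-almost everywhere on $K$. Restricting to $\Omega$ gives exactly the claimed limit.

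The only substantive point is the identification $\mathcal{F}_\infty=\mathcal{B}(K)$, which is where the ultrametric structure of $\Q_p$ truly enters and makes the argument significantly softer than its Euclidean analogue. A purely real-variable alternative would mimic the classical Lebesgue differentiation theorem via a Vitali-type covering argument, but in $\Q_p$ such a covering lemma is essentially trivial since any two balls are either disjoint or nested; I nevertheless prefer the martingale packaging because it avoids introducing any auxiliary maximal function.
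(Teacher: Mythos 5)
Your martingale argument is correct and complete. Note, however, that the paper itself offers no proof of this proposition: it is quoted verbatim from the reference of Popken and Turkstra, where it is established by a classical covering/differentiation argument in the spirit of Lebesgue's original theorem. So there is nothing in the paper to compare against line by line; what you have written is an independent, self-contained proof. The key points all check out: for a ball $K\supset\Omega$ the balls of radius $p^{-n}$ do partition $K$ into finitely many clopen atoms (by the ultrametric property every point of a ball is a centre, so the atom containing $x$ is exactly $B(x,p^{-n})$); the normalization by $\m(K)$ cancels in the ratio, so $\mathbb{E}[1_\Omega\mid\mathcal{F}_n](x)=\m(B(x,p^{-n})\cap\Omega)/\m(B(x,p^{-n}))$; the union of the $\mathcal{F}_n$ generates the Borel $\sigma$-algebra of $K$ because the (countably many) balls form a base of the topology; and L\'evy's upward theorem then gives $f_n\to 1_\Omega$ $\m$-a.e., which restricted to $\Omega$ is the assertion. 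Compared with the covering-lemma route, your packaging is arguably cleaner in the $p$-adic setting precisely because the dyadic-like nesting of balls is automatic, at the cost of invoking martingale convergence as a black box; the covering argument is more elementary in its prerequisites and extends to non-nested families of sets, which is irrelevant here.
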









\subsection{Fourier Transformation}
The Fourier transformation of  $f\in L^1(\Q_p)$ is defined to be 
$$\widehat{f}(\xi)=\int_{\Qp}f(x)\overline{\chi_\xi(x)} dx   \quad (\forall \xi\in \widehat{\Q}_p\simeq \Qp).$$

A complex function $f$ defined on $\Q_p$ is called \textit{uniformly locally constant} if  there exists $n\in \mathbb{Z}$ such that 
\[f(x+u)=f(x) \quad \forall x\in \Q_p, \forall u \in B(0, p^n).\] 
The following proposition shows that for  an integrable function $f$, having compact support and being uniformly locally constant are dual properties for $f$ and its Fourier transform.
 
\begin{prop}\label{2case}
	Let $f\in L^1(\Q_p)$ be  a complex-value integrable function. \\
	\indent {\rm (1)} If $f$ has compact support, then $\widehat{f}$ is uniformly locally constant.\\
   \indent {\rm (2)} If $f$ is uniformly locally constant, then $\widehat{f}$ has compact support.
\end{prop}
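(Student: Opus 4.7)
Both statements reduce to the ultrametric inequality combined with the orthogonality relation (\ref{integral-chi}), and neither presents a serious obstacle.

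For (1), the plan is to pick $n \in \Z$ such that $\operatorname{supp}(f) \subset B(0, p^n)$ and to analyze $\widehat{f}(\xi + u) - \widehat{f}(\xi)$ for $u$ small. For $u \in B(0, p^{-n})$ and $x \in B(0, p^n)$ the ultrametric inequality gives $|ux|_p \le 1$, so by (\ref{one-in-unit-ball}) the character $\chi(ux)$ equals $1$. This will force the integrand to vanish identically on $\operatorname{supp}(f)$, showing that $\widehat{f}$ is constant on every ball of radius $p^{-n}$, i.e., uniformly locally constant with parameter $-n$.

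For (2), the plan is to fix $n$ such that $f(x+u)=f(x)$ for all $u \in B(0, p^n)$ and to exploit the explicit Fourier transform of a single ball. Choosing coset representatives $\{c_k\}$ of $\Q_p/B(0,p^n)$, the function $f$ takes the constant value $f(c_k)$ on each coset $c_k + B(0, p^n)$, and the series $\sum_k |f(c_k)|\, p^n$ converges to $\|f\|_1$. Swapping sum and integral in the defining expression for $\widehat{f}$ reduces everything to computing $\int_{B(0, p^n)} \overline{\chi(\xi u)}\, du$, which a change of variable together with (\ref{integral-chi}) identifies with $p^n \cdot 1_{B(0, p^{-n})}(\xi)$. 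This immediately yields that $\widehat{f}$ vanishes off $B(0, p^{-n})$, hence $\widehat{f}$ is compactly supported.

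The only point requiring a brief justification is the interchange of summation and integration in part (2), which is legitimate by absolute convergence since $f \in L^1$. The ``hardest'' step is really just recognizing that the $p$-adic analogue of the classical formula $\widehat{1_{B(0,R)}} \propto 1_{B(0,1/R)}$ carries over verbatim, with equality (not just approximate support) thanks to the ultrametric.
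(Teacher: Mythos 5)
Your proposal is correct and follows essentially the same route as the paper: part (1) via the difference $\widehat{f}(\xi+u)-\widehat{f}(\xi)$ and the ultrametric bound $|uy|_p\le 1$ forcing $\chi(uy)=1$, and part (2) via decomposing $\Q_p$ into cosets of $B(0,p^n)$ and using $\widehat{1_{B(0,p^n)}}=p^n 1_{B(0,p^{-n})}$. The only difference is that you explicitly justify the interchange of sum and integral by absolute convergence, which the paper leaves implicit.
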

\begin{proof}
\indent {\rm (1)}	Suppose that $f$ is supported by $B(0,p^n)$. For any $x\in \Q_p$ and any $u \in B(0, p^{-n})$, we have
	\begin{align*}
		\widehat{f}(\xi+u)-\widehat{f}(\xi)
		&=\int_{B(0,p^n)}f(y)\overline{\chi(\xi y)}(\overline{\chi(uy)-1})dy.
	\end{align*}
	Notice that if $y\in B(0,p^n)$, we have $|uy|_p\le 1$. So by (\ref{one-in-unit-ball}), $$\chi(uy)-1=0.$$
	Therefore, $\widehat{f}(\xi+u)-\widehat{f}(\xi)=0$  for
	all $u \in B(0, p^{-n})$. Thus
         $\widehat{f}$ is  uniformly  locally constant. 
	
   \indent {\rm (2)} Suppose  that $f(x+u)=f(x)$ for any $x\in \Q_p$ and any $u \in B(0, p^{n})$. Observing $$\Q_p= \mathbb{L}_{n}+B(0, p^{n})=\bigcup_{z\in \mathbb{L}_{n}} B(z, p^{n}),$$ we deduce
	$$
	\widehat{f}(\xi)=\sum_{z \in \mathbb{L}_{n}}\int_{B(z, p^{n})}f(y)\overline{\chi(\xi y)}dy
	=\sum_{z \in \mathbb{L}_{n}}f(z)\int_{B(z, p^{n})}\overline{\chi(\xi y)}dy.
	$$
By (\ref{integral-chi}), we have $\int_{B(z, p^{n})}\chi(\xi y)dy=0$ for $|\xi|_p>p^{-n} $.  Therefore, $\widehat{f}(\xi) =0$ for $|\xi|_p >p^{-n}$.
\end{proof}	
The above proposition is the key which will allow us to prove that a tile  $\Omega$ in  $\Q_p$ is an almost compact open set by showing that the support of $\widehat{1_{\Omega}}$ is compact (see Proposition \ref{CompactSupport}). 
Consequently, a tile is a spectral set by Theorem \ref{compactopen}.


Now we  prove that the spectra  and tiling complements are always uniformly discrete.

\begin{prop}\label{uniformdiscrete} Let $\Omega\subset \Q_p$ be a Borel set of positive and finite Haar measure.\\
\indent{\rm (1)} If  $(\Omega,\Lambda)$ is a spectral pair, then $\Lambda$ is uniformly discrete. \\
\indent{\rm (2)} If $(\Omega,T)$ is a tiling pair, then $T$ is uniformly discrete. 
\end{prop}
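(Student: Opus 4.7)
My plan is to derive each uniform discreteness statement from a zero-locus property of the pair together with a local positivity property of $\Omega$.

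For part (1), orthogonality in $L^2(\Omega)$ of $\{\chi_\lambda\}_{\lambda\in\Lambda}$ yields, for every distinct pair $\lambda,\lambda'\in\Lambda$,
\[ \widehat{1_\Omega}(\lambda-\lambda') \;=\; \int_\Omega \chi\bigl(-(\lambda-\lambda')x\bigr)\,dx \;=\; 0. \]
Since $1_\Omega \in L^1(\Qp)$, its Fourier transform is continuous on $\widehat{\Qp}\simeq\Qp$, with value $\m(\Omega)>0$ at the origin. Continuity at $0$ produces an integer $N$ such that $\widehat{1_\Omega}$ does not vanish on the ball $B(0,p^{-N})$. Hence $\lambda-\lambda' \notin B(0,p^{-N})$ whenever $\lambda\neq\lambda'$, which gives $|\lambda-\lambda'|_p \geq p^{-N+1}$.

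For part (2), the tiling condition forces
\[ \m\bigl(\Omega\cap(\Omega+(t-t'))\bigr) \;=\; 0 \]
for every distinct pair $t,t'\in T$. To rule out arbitrarily small nonzero differences I invoke Proposition \ref{Lebesgue}, applied to the bounded set $\Omega\cap B(0,p^M)$, which has positive measure for $M$ sufficiently large. This supplies a density point $x_0\in\Omega$, so that $\m(B(x_0,p^{-n})\cap\Omega) > \tfrac{3}{4}p^{-n}$ for $n$ large. The non-Archimedean identity $B(x_0,p^{-n})-s = B(x_0,p^{-n})$, valid whenever $|s|_p\leq p^{-n}$, then yields $\m(B(x_0,p^{-n})\cap(\Omega+s)) > \tfrac{3}{4}p^{-n}$ as well. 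Two subsets of a ball of measure $p^{-n}$, each occupying more than three quarters of it, must overlap in a set of measure $>\tfrac{1}{2}p^{-n}>0$, contradicting the displayed disjointness as soon as $s=t-t'\neq 0$ satisfies $|s|_p\leq p^{-n}$. Consequently $|t-t'|_p$ is bounded below by a positive constant.

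There is no serious obstacle; the minor subtlety is that $\Omega$ need not be bounded, handled in (2) by truncating to a bounded piece of positive measure before invoking Proposition \ref{Lebesgue}. Part (1) needs no truncation, since the Fourier transform of an $L^1$ function is automatically continuous. Both arguments rely essentially on the non-Archimedean geometry of $\Qp$: balls are subgroups invariant under translation by their own elements, which collapses the translation estimate in (2) and makes the nonvanishing neighborhood $B(0,p^{-N})$ in (1) a clean one.
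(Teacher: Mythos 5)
Your proof is correct. Part (1) is essentially identical to the paper's argument: orthogonality forces $\widehat{1_\Omega}(\lambda-\lambda')=0$, while continuity of $\widehat{1_\Omega}$ and $\widehat{1_\Omega}(0)=\m(\Omega)>0$ give a ball around $0$ on which it cannot vanish. Part (2) reaches the same key intermediate fact as the paper --- that $\m(\Omega\cap(\Omega+s))>0$ for all $s$ in some ball $B(0,p^{n_0})$ --- but by a different mechanism. The paper observes that $s\mapsto \m(\Omega\cap(\Omega+s))=1_\Omega*1_{-\Omega}(s)$ is a continuous function with positive value at $0$, which settles the matter in one line and needs no boundedness of $\Omega$; you instead invoke the Lebesgue density theorem (Proposition \ref{Lebesgue}), pick a density point $x_0$, and use the ultrametric identity $B(x_0,p^{-n})-s=B(x_0,p^{-n})$ for $|s|_p\le p^{-n}$ to force an overlap of measure $>\tfrac12 p^{-n}$. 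Your route is more hands-on and makes the non-Archimedean geometry visible, at the cost of the truncation step (needed because Proposition \ref{Lebesgue} is stated for bounded sets) and of fixing the scale $n$ once and for all before quantifying over $s$ --- which you do correctly. The paper's route is shorter and treats (1) and (2) symmetrically, both being instances of ``a continuous autocorrelation-type function positive at the origin is positive on a neighborhood.''
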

\begin{proof}{\rm (1)} By the fact  $\widehat{1_{\Omega}}(0)=\m(\Omega)>0$ and the continuity of  the function $\widehat{1_{\Omega}}$, there exists an integer $n_0$ such that 
$\widehat{1_{\Omega}}(x)\neq 0$ for all $x\in B(0, p^{n_0})$. 
This, together with  the orthogonality  
    $$\widehat{1_{\Omega}}(\lambda-\lambda^{\prime})=0\quad \forall \ \lambda, \lambda^{\prime} \in \Lambda \ \text{distinct }, $$ 
    implies that  $|\lambda-\lambda^{\prime}|_p>p^{n_0}$ for different $\lambda,\lambda^{\prime}\in \Lambda$.


\indent{\rm (2)}  Consider the continuous function on $\mathbb{Q}_p$ defined by
$$
f(x):= 1_\Omega *1_{-\Omega}(x)=
\int_{\mathbb{Q}_p} 1_\Omega(y)1_\Omega(y-x)dy
=\m(\Omega \cap (\Omega+x)).
$$
The fact $f(0)=\m(\Omega)>0$ and the continuity of $f$ imply  that  there exists an integer $n_0$ such that 
   \begin{equation}\label{OO}
   	\m(\Omega\cap(\Omega+t))>0, \quad \text {for }  t \in B(0,p^{n_0}).
   	\end{equation}
For different $t\in T$ and $t^{\prime}\in T$, the tiling property
implies $\m((\Omega+t)\cap(\Omega+t^{\prime}))=0$. So  
$$\m(\Omega\cap (\Omega+t-t^{\prime}))
= \m((\Omega+t)\cap(\Omega+t^{\prime}))=0$$
by the translation invariance of $\m$.   
Thus we must have $|t-t^{\prime}|_p >p^{n_0}$, by (\ref{OO}). 
\end{proof}

\subsection{$\mathbb{Z}$-module generated by $p^n$-th roots of unity} \label{Zmodule}

Let $m\ge 2$ be an integer and let  $\omega_m = e^{2\pi i/m}$, which is a primitive $m$-th root of unity. Denote  by $\mathcal{M}_m$  the set of integral points
$(a_0, a_1, \cdots, a_{m-1}) \in \mathbb{Z}^m$ such that
$$
\sum_{j=0} ^{m-1} a_j \omega_m^j =0.
$$
The set $\mathcal{M}_m$ is clearly a $\mathbb{Z}$-module. In the following  we assume that $m=p^n$ is a power of a prime number. 

\begin{lem} [\cite{s}, Theorem 1]\label{SchLemma}
If $(a_0,a_1,\cdots, a_{p^n-1})\in  \mathcal{M}_{p^n}$,
	then for any integer $0\le i\le p^{n-1}-1$ we have $a_i=a_{i+jp^{n-1}}$ for all $j=0,1,\dots, p-1$.
\end{lem}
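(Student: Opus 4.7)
The plan is to realise the statement as a divisibility fact in $\mathbb{Z}[x]$ and then read off the coefficients. The key observation is that the minimal polynomial of $\omega_{p^n}$ over $\mathbb{Q}$ is the cyclotomic polynomial
\[
\Phi_{p^n}(x) \;=\; 1 + x^{p^{n-1}} + x^{2p^{n-1}} + \cdots + x^{(p-1)p^{n-1}},
\]
of degree $\varphi(p^n) = p^n - p^{n-1}$, and that this polynomial is \emph{monic} with integer coefficients.

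First I would associate to the tuple $(a_0, \dots, a_{p^n-1}) \in \mathcal{M}_{p^n}$ the polynomial
\[
P(x) \;=\; \sum_{j=0}^{p^n-1} a_j x^j \;\in\; \mathbb{Z}[x].
\]
The relation $\sum_j a_j \omega_{p^n}^j = 0$ says exactly $P(\omega_{p^n}) = 0$. Since $\Phi_{p^n}$ is the minimal polynomial of $\omega_{p^n}$ over $\mathbb{Q}$, it divides $P$ in $\mathbb{Q}[x]$. Here comes the main technical point: because $\Phi_{p^n}$ is monic with integer coefficients, the Euclidean division of $P$ by $\Phi_{p^n}$ can be performed inside $\mathbb{Z}[x]$, so in fact there exists $Q \in \mathbb{Z}[x]$ with
\[
P(x) \;=\; \Phi_{p^n}(x)\, Q(x), \qquad \deg Q \;\le\; p^n - 1 - (p^n - p^{n-1}) \;=\; p^{n-1} - 1.
\]

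Then I would write $Q(x) = \sum_{i=0}^{p^{n-1}-1} b_i x^i$ with $b_i \in \mathbb{Z}$ and simply expand
\[
P(x) \;=\; \left( \sum_{k=0}^{p-1} x^{k p^{n-1}} \right) \left( \sum_{i=0}^{p^{n-1}-1} b_i x^i \right) \;=\; \sum_{k=0}^{p-1} \sum_{i=0}^{p^{n-1}-1} b_i \, x^{\,i + k p^{n-1}}.
\]
The exponents $i + kp^{n-1}$ with $0 \le i \le p^{n-1}-1$ and $0 \le k \le p-1$ are pairwise distinct and exhaust $\{0, 1, \dots, p^n-1\}$, so matching coefficients with $P(x) = \sum a_j x^j$ yields $a_{i + k p^{n-1}} = b_i$ for every admissible $i,k$. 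In particular $a_i = a_{i + j p^{n-1}}$ for all $j = 0, 1, \dots, p-1$, which is the claim.

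The only potentially delicate step is the integrality of the quotient $Q$; this is really just Gauss's lemma (or, equivalently, the fact that one can perform long division by a monic polynomial inside $\mathbb{Z}[x]$), so no serious obstacle arises. The rest is coefficient bookkeeping made trivial by the sparse shape of $\Phi_{p^n}$.
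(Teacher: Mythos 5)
Your proof is correct: the paper does not prove this lemma itself but cites Schoenberg's Theorem 1, and your argument --- writing $P(x)=\sum a_j x^j$, using that $\Phi_{p^n}(x)=1+x^{p^{n-1}}+\cdots+x^{(p-1)p^{n-1}}$ is the (monic, integral) minimal polynomial of $\omega_{p^n}$ to get $P=\Phi_{p^n}Q$ with $\deg Q\le p^{n-1}-1$, then matching coefficients --- is essentially Schoenberg's own proof. The one step worth stating explicitly is that the exponents $i+kp^{n-1}$ are distinct and exhaust $\{0,\dots,p^n-1\}$ precisely because this is the division-with-remainder representation modulo $p^{n-1}$, but you have that right.
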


Lemma \ref{SchLemma} has the following two special forms. The first one is an immediate consequence.
\begin{lem}
\label{permu}
	Let $(b_0,b_1,\cdots, b_{p-1})\in \mathbb{Z}^{p}$.
	If $\sum_{j=0}^{p-1} e^{2\pi i b_j/p^n}=0$, then subject to a permutation of $(b_0,\cdots, b_{p-1})$, there exist $0\leq r \leq p^{n-1}-1$ such  that
	$$b_j  \equiv r+ jp^{n-1} (\!\!\!\!\mod p^n) $$ for all  $ j =0,1, \cdots,p-1$.
\end{lem}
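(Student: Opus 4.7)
The plan is to package the statement as an instance of Lemma \ref{SchLemma} applied to the multiplicity vector of the residues $b_j \bmod p^n$. First I would define, for $0 \le k \le p^n-1$,
$$a_k := \#\bigl\{\, j \in \{0,1,\ldots,p-1\} : b_j \equiv k \pmod{p^n}\,\bigr\},$$
so that $\sum_{k=0}^{p^n-1} a_k = p$ and, because $e^{2\pi i b_j/p^n}$ depends only on $b_j \bmod p^n$, the hypothesis reads $\sum_{k=0}^{p^n-1} a_k \omega_{p^n}^k = 0$. Thus $(a_0,\ldots,a_{p^n-1})\in \mathcal{M}_{p^n}$ and Lemma \ref{SchLemma} applies.

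The conclusion of that lemma is that for every $0 \le i \le p^{n-1}-1$ the $p$ entries $a_i, a_{i+p^{n-1}}, \ldots, a_{i+(p-1)p^{n-1}}$ share a common value, which I denote $c_i \in \mathbb{Z}_{\ge 0}$. Summing,
$$p \;=\; \sum_{k=0}^{p^n-1} a_k \;=\; p\sum_{i=0}^{p^{n-1}-1} c_i,$$
so the nonnegative integers $c_i$ sum to $1$. Hence exactly one $c_i$, say at an index $r\in\{0,\ldots,p^{n-1}-1\}$, equals $1$, and every other vanishes.

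Reading this back, the residues of $b_0, \ldots, b_{p-1}$ modulo $p^n$ form exactly the multiset $\{\, r,\; r+p^{n-1},\; \ldots,\; r+(p-1)p^{n-1}\,\}$, each with multiplicity $1$, which gives the desired conclusion after permuting the indices. The only subtlety is the use of nonnegativity and integrality of the multiplicities to convert $\sum_i c_i = 1$ into a unique nonzero $c_r$; the authors rightly label the statement an immediate consequence of Schoenberg's theorem, and I foresee no serious obstacle beyond the bookkeeping above.
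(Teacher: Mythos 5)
Your proof is correct and follows the same route the paper intends: the paper offers no written argument beyond calling the lemma ``an immediate consequence'' of Lemma \ref{SchLemma}, and your reduction to the multiplicity vector $(a_0,\ldots,a_{p^n-1})\in\mathcal{M}_{p^n}$, followed by the counting step $p = p\sum_i c_i$ forcing a unique nonzero $c_r=1$, is exactly the deduction being left to the reader. No gaps.
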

\begin{lem}[\cite{FFS}, Lemma 2.5]\label{C}   
	Let $C$ be a finite subset of $\mathbb{Z}$. If $\sum_{c\in C} e^{2\pi i c/p^n}=0$, then $p \mid {\rm Card} (C)$ and $C$ is decomposed into  ${\rm Card}( C) /p$ disjoint subsets
	$C_1,  C_2,  \cdots, $ $ C_{{\rm Card} (C)/p}$, such that each $C_j$ consists of $p$ points and
	$$\sum_{c\in C_j}e^{2\pi i c/p^n}=0.$$
\end{lem}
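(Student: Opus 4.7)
The plan is to reduce everything to Lemma \ref{SchLemma} by counting how many elements of $C$ fall in each residue class modulo $p^n$. Since the character value $e^{2\pi i c/p^n}$ depends only on $c \bmod p^n$, the hypothesis rewrites as a linear relation with nonnegative integer coefficients among the $p^n$-th roots of unity, which is exactly the situation Lemma \ref{SchLemma} controls.

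More precisely, I would set $a_i := \Card\{\,c\in C : c \equiv i \pmod{p^n}\,\}$ for $0\le i \le p^n-1$, so that $\sum_{i=0}^{p^n-1} a_i\,\omega_{p^n}^i=0$ and hence $(a_0,\dots,a_{p^n-1})\in\mathcal{M}_{p^n}$. Lemma \ref{SchLemma} then yields $a_i = a_{i+jp^{n-1}}$ for every $0\le i\le p^{n-1}-1$ and every $j=0,1,\dots,p-1$. Summing gives $\Card(C)=\sum_i a_i = p\sum_{i=0}^{p^{n-1}-1} a_i$, so $p\mid\Card(C)$ at once.

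Next I would build the decomposition. For each fixed $0\le i\le p^{n-1}-1$, the $p$ residue classes $i,\, i+p^{n-1},\,\dots,\,i+(p-1)p^{n-1}$ modulo $p^n$ each contain exactly $a_i$ elements of $C$. I pair these $p\cdot a_i$ elements arbitrarily into $a_i$ blocks of size $p$, each block containing exactly one representative from each of the $p$ classes. Taking the union over $i$ produces a partition $C=C_1\sqcup\cdots\sqcup C_{\Card(C)/p}$ into $p$-element subsets.

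Finally I would verify the sum condition: if $C_k=\{c_0,\dots,c_{p-1}\}$ with $c_\ell\equiv i+\ell p^{n-1}\pmod{p^n}$, then
\[
 \sum_{\ell=0}^{p-1} e^{2\pi i c_\ell/p^n} = e^{2\pi i\cdot i/p^n}\sum_{\ell=0}^{p-1} e^{2\pi i\ell/p}=0,
\]
since the full set of $p$-th roots of unity sums to $0$. No serious obstacle is anticipated: the whole arithmetic content is packaged inside Lemma \ref{SchLemma}, and what remains is bookkeeping. The one point that demands attention is ensuring the blocks can be assembled with one member drawn from each of the $p$ translated residue classes, and this is guaranteed precisely by the equalities $a_i=a_{i+jp^{n-1}}$.
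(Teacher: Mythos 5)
Your proof is correct; note that the paper does not actually prove this lemma but imports it from \cite{FFS} (Lemma 2.5), and your argument --- counting the multiplicities $a_i$ of the residue classes mod $p^n$, applying Lemma \ref{SchLemma} to get $a_i=a_{i+jp^{n-1}}$, and assembling blocks consisting of one element from each of the $p$ translated classes --- is exactly the standard reduction used in that reference. The only cosmetic flaw is that in your final display the symbol $i$ serves simultaneously as the residue index and as the imaginary unit.
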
 
Now applying Lemmas \ref{permu} and \ref{C}, we have the following lemma which will be useful in the paper.
\begin{lem}\label{zmod}
Let $C\subset\Q_p$ be a finite set.\\
\indent{\rm(1)} If $\sum_{c\in C}\chi(c)=0,$ then $p\mid {\rm Card}(C)$ and  $\sum_{c\in C}\chi(x  c)=0$
	for any $x \in \mathbb{Z}_p^\times$ (i.e. $|x|_p=1$).\\
\indent{\rm(2)} If there exists $\xi\in \Qp$ such that 
	$\sum_{c\in C}\chi(\xi c)=0,$ 
	then for any $c\in C$, there exists  $c^{\prime}\in C$ such that 	$|c-c^{\prime}|_p=p/|\xi|_p.$\\
\indent{\rm(3)}	
If there exists a finite set $\mathbb{I}\subset \Z$ 
such that 
	\begin{align}\label{keyeq}
	\sum_{c\in C}\chi(p^i c)=0 \quad  \text{for all } i\in \mathbb{I},
	\end{align}
then $p^{{\rm Card}(\mathbb{I})} \mid  \Card(C).$ 
\end{lem}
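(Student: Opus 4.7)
The plan is to reduce each of the three parts to the already-established facts about $p^n$-th roots of unity, Lemma \ref{permu} and Lemma \ref{C}, by passing to a common denominator. Since $C$ is finite, there exists an integer $N\ge 0$ such that $v_p(c)\ge -N$ for every $c\in C$; consequently $\{c\}=k_c/p^N$ for some $k_c\in\{0,1,\dots,p^N-1\}$, and $\chi(c)=e^{2\pi i k_c/p^N}$. For part (1), the hypothesis reads $\sum_{c\in C} e^{2\pi i k_c/p^N}=0$, so Lemma \ref{C} yields $p\mid \Card(C)$ together with a partition $C=\bigsqcup_j C_j$ with $\Card(C_j)=p$ and $\sum_{c\in C_j}\chi(c)=0$. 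Lemma \ref{permu} then identifies, within each $C_j$ and up to relabeling, the integers $k_c$ as $k_c\equiv r_j+\ell p^{N-1}\pmod{p^N}$, $\ell=0,\dots,p-1$, for some $r_j$. Writing $x\in\Zp$ with $|x|_p=1$ as $x=\bar x+p^N y$ with $y\in\Zp$ and $\bar x\in\mathbb{Z}$ coprime to $p$, the fact that $p^N y c\in\Zp$ gives $\chi(xc)=e^{2\pi i \bar x k_c/p^N}$, so the sum over $C_j$ factors as $e^{2\pi i\bar x r_j/p^N}\sum_{\ell=0}^{p-1}e^{2\pi i\bar x\ell/p}=0$ because $\bar x$ is coprime to $p$.

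For part (2), I would apply the same structural decomposition to the set $\xi C$, whose $\chi$-sum vanishes by assumption, and let $C=\bigsqcup_j C_j$ be the induced partition of $C$. Within each block $\xi C_j$, the description furnished by Lemma \ref{permu} shows that any two distinct elements have fractional parts differing by a nonzero multiple of $1/p$ modulo $\Z$, so $|\xi(c-c')|_p=p$ and therefore $|c-c'|_p=p/|\xi|_p$. Since each $c\in C$ lies in a block of cardinality $p\ge 2$, a partner $c'$ with the required distance exists.

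For part (3), I induct on $\Card(\mathbb{I})$. The base case $\Card(\mathbb{I})=1$ is exactly part (1). For the step, set $i_0=\min\mathbb{I}$ and apply the decomposition from part (1) to $p^{i_0}C$ to obtain $C=\bigsqcup_j C_j$ with $\Card(C_j)=p$. The crucial calculation is that the arithmetic-progression structure of Lemma \ref{permu} forces any two $c,c'\in C_j$ to satisfy $c-c'\in p^{-1-i_0}\Z+p^{-i_0}\Zp$, so for every $i\in\mathbb{I}\setminus\{i_0\}$ (hence $i\ge i_0+1$) we have $p^i(c-c')\in\Zp$ and consequently $\chi(p^i c)=\chi(p^i c')$. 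Choosing a transversal $C^*\subset C$ with one element per block then collapses the hypothesis to $p\sum_{c^*\in C^*}\chi(p^i c^*)=0$ for every such $i$, and the inductive hypothesis applied to $C^*$ with the smaller index set $\mathbb{I}\setminus\{i_0\}$ yields $p^{\Card(\mathbb{I})-1}\mid \Card(C^*)=\Card(C)/p$, whence $p^{\Card(\mathbb{I})}\mid \Card(C)$. The main obstacle I anticipate is precisely this $p$-adic repackaging of Lemma \ref{permu} in part (3): one must verify carefully that within a block the differences live in $p^{-1-i_0}\Z+p^{-i_0}\Zp$, so that multiplication by $p^i$ with $i>i_0$ lands in $\Zp$ and trivializes the character; once this is in place, the rest reduces to bookkeeping on top of Lemmas \ref{permu} and \ref{C}.
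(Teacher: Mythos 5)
Your proof is correct and follows essentially the same route as the paper's: both parts (2) and (3) rest on the block decomposition of $C$ into $p$-element subsets with vanishing character sums via Lemmas \ref{permu} and \ref{C}, and your part (3) is the same collapse-and-induct argument on $i_0=\min\mathbb{I}$, with your containment $c-c'\in p^{-1-i_0}\Z+p^{-i_0}\Zp$ playing the role of the paper's equivalent estimate $|c-c'|_p\le p^{1+i_0}$ to show $\chi(p^i\cdot)$ is constant on blocks. The only substantive difference is that you write out part (1) in full (common denominator, reduction of $x$ modulo $p^N$, vanishing of $\sum_{\ell=0}^{p-1}e^{2\pi i\bar x\ell/p}$), whereas the paper delegates it to \cite[Lemma 2.6]{FFS}; your argument there is also correct.
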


\begin{proof}{\rm(1)} It is a direct consequence of Lemmas \ref{permu} and \ref{C}. See \cite[Lemma 2.6]{FFS} for  details.\\
\indent{\rm(2)} Let $C=\{c_1,c_2,\cdots, c_m \}$. Recall that $\chi(\xi c)=e^{2\pi i\{\xi c\}}$.  There exist an integer $n$ and a subset $\{n_1, n_2 \cdots, n_m\}$ of $\Z$ such that  
$$\chi(\xi c_k)=e^{2\pi i n_k/p^n} , \quad  k=1,2,\cdots, m.$$ 
By Lemma \ref{C},  $p\mid \Card(C)$ and $C$  is decomposed  into $\Card(C)/p$ disjoint subsets $C_0, C_1,\cdots,C_{\Card(C)/p}$, such that each $C_j$ consists of $p$ points and
	$$\sum_{c\in C_j}\chi(\xi c)=0.$$ Without loss of generality, assume that $c\in C_0$. By  Lemma \ref{permu}, we have 
	$$|c-c^{\prime}|_p=\frac{p}{|\xi|_p}, \quad \text{ if  } c^{\prime} \in C_{0}\setminus\{c\}.$$\\
\indent {\rm(3)} Assume that  $\mathbb{I}=\{i_1,i_2,\cdots,i_n \}$ with $i_1<i_2<\cdots<i_n$.
 By Lemma \ref{C} and the equality (\ref{keyeq}) with $i=i_1$, we have $p\mid \Card(C)$ and  $C$ can be decomposed into  $ \Card(C)/p$ subsets $C_1, C_2, \cdots, C_{ \Card(C)/p}$ such that
	 each $C_j$ consists of $p$ points and $$\sum_{c \in C_j} \chi(p^{i_1} c) =0.$$
	By Lemma \ref{permu}, if  $c$ and  $ c^{\prime}$ lie in the same $C_j$, we have
	 \begin{align}\label{ceq1}
	 	|c-c^{\prime}|_p\leq  p^{1+i_1}.
	 \end{align}
	  

	 Now we consider the  equality (\ref{keyeq}) when $i=i_2$.  Since $i_{1}< i_{2} $,    (\ref{ceq1}) and  (\ref{one-in-unit-ball}) imply that the function  $$c\mapsto \chi(p^{i_2} c)$$ is constant on each $C_j$. From each $C_j$, take one element  $\widetilde{c_j}$. Let $\widetilde{C}$ be the set consisting  of these
	 $\widetilde{c_j}$. 
	  

	  Since each $C_j$ contains $p$ elements,  the equality (\ref{keyeq}) with $i=i_2$ is equivalent to 
	 $$
	 \sum_{\widetilde{c} \in \widetilde{C}} \chi(p^{i_2}c) =0.
	 $$
	 By Lemma \ref{C},  $p \mid \Card( \widetilde{C} )$, which implies $p^2\mid  \Card(C)$.
	  
	 
	  By induction,  we get $p^n\mid   \Card(C)$. \\
\end{proof}

\subsection{Bruhat-Schwartz distributions in $\Q_p$}\label{subsec2.4}
Here we give a brief description of the theory following 
\cite{Aks,t,Vvz}. 
  Let $\mathcal{E}$ denote the space of the uniformly locally constant functions.
The space $\mathcal{D}$ of {\em Bruhat-Schwartz test functions} is, by definition, constituted of uniformly locally constant functions
 of compact support. Such a test function $f\in \mathcal{D}$ is a finite linear combination of indicator functions of the form $1_{B(x,p^k)}(\cdot)$, where $k\in \mathbb{Z}$ and $x\in \Q_p$. The largest of such numbers $k$
 is  denoted by $\ell:= \ell(f)$ and is  called the {\em parameter of constancy} of $f$. Since $f\in \mathcal{D}$ has compact support, the minimal number $\ell':=\ell'(f)$ such that the support of $f$ is contained in $B(0, p^{\ell'})$ exists and will be called the {\em parameter of compactness} of $f$.

 Clearly, $\mathcal{D}\subset \mathcal{E}$. The space $\mathcal{D}$ is provided with a topology of topological vector space as follows: a sequence $\{\phi_n \}\subset \mathcal{D}$ is called a {\em null sequence} if there is a fixed pair of $l, l^{\prime}\in \mathbb{Z}$ such that each $\phi_n$ is constant on every ball of radius $p^l$ and is supported by the ball $B(0,p^{l^{\prime}})$ and the sequence $\phi_n$ tends uniformly to zero.
 
 A {\em Bruhat-Schwartz distribution} $f$ on $\Q_p$ is by definition a continuous linear functional on $\mathcal{D}$. The value of $f$ at $\phi \in \mathcal{D}$
 will  be denoted by $\langle f, \phi \rangle$.  
 Note that linear functionals on $\mathcal{D}$ are automatically continuous. This property allows us to easily construct distributions. Denote by
 $\mathcal{D}'$ the space of  Bruhat-Schwartz  distributions. The space $\mathcal{D}'$ is equipped with the weak topology induced by  $\mathcal{D}$.
 
  A locally integrable function $f$ is considered as a distribution:  for any $\phi \in \mathcal{D}$,
$$
\langle f,\phi\rangle=\int_{\Q_p} f\phi d\m.
$$
The discrete measure $\mu_E$ defined by (\ref{mu_E}) is also a distribution:
for any $\phi \in \mathcal{D}$,
$$
\langle \mu_E,\phi\rangle= \sum_{\lambda\in E} \phi(\lambda).
$$
Here for each $\phi$, the sum is finite  because $E$ is uniformly discrete and each ball contains at most a finite number of points in $E$.
Since the test functions in $\mathcal{D}$ are  uniformly locally constant  and have compact support,
the following proposition is a direct consequence of Proposition \ref{uniformdiscrete} or of the fact (see also \cite[Lemma 4]{Fan}) that
\begin{equation}\label{FB}
\widehat{1_{B(c, p^k)}}(\xi) = \chi(-c \xi) p^{k} 1_{B(0, p^{-k})}(\xi).
\end{equation}

\begin{prop}[\cite{t}, Chapter II 3]
 The Fourier transformation  $f \mapsto \widehat{f}$ is a  homeomorphism  from $\mathcal{D}$ onto $\mathcal{D}$.
  \end{prop}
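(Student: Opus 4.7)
The plan is to unpack the proposition as a direct consequence of the duality between compact support and uniform local constancy already established in Proposition \ref{2case}, together with Fourier inversion and a quantitative tracking of the parameters $\ell(f)$ and $\ell'(f)$.

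\medskip

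\textbf{Well-definedness.} First I would verify that $\mathcal{F}$ maps $\mathcal{D}$ into $\mathcal{D}$. If $f\in \mathcal{D}$, then $f$ is both compactly supported and uniformly locally constant, so applying Proposition \ref{2case}(1) gives that $\widehat{f}$ is uniformly locally constant, while applying Proposition \ref{2case}(2) gives that $\widehat{f}$ has compact support. Hence $\widehat{f}\in\mathcal{D}$. In fact, the proofs of Proposition \ref{2case} yield the precise bounds
\[
\ell(\widehat{f})\ \ge\ -\ell'(f),\qquad \ell'(\widehat{f})\ \le\ -\ell(f),
\]
which will be the essential quantitative input for continuity.

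\medskip

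\textbf{Bijectivity.} For a function $f\in\mathcal{D}$, the formula (\ref{FB}) computes the Fourier transform of $1_{B(c,p^k)}$ explicitly, and since every $f\in\mathcal{D}$ is a finite linear combination of such indicator functions, one deduces the pointwise Fourier inversion $\widehat{\widehat{f}}(x) = f(-x)$ on $\mathcal{D}$ by a direct computation on the basis $\{1_{B(c,p^k)}\}$. Thus $\mathcal{F}^2$ equals the reflection $f\mapsto f(-\cdot)$, which is clearly a bijection on $\mathcal{D}$; this makes $\mathcal{F}$ itself a bijection of $\mathcal{D}$ with $\mathcal{F}^{-1}f(\xi)=\widehat{f}(-\xi)$.

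\medskip

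\textbf{Continuity.} By definition, it suffices to show that $\mathcal{F}$ sends null sequences to null sequences. Let $\{\phi_n\}\subset \mathcal{D}$ be a null sequence: there exist fixed integers $\ell,\ell'\in\mathbb{Z}$ so that every $\phi_n$ is constant on balls of radius $p^\ell$, is supported in $B(0,p^{\ell'})$, and $\|\phi_n\|_\infty\to 0$. By the bounds above, every $\widehat{\phi_n}$ is constant on balls of radius $p^{-\ell'}$ and supported in $B(0,p^{-\ell})$, so the constancy and compactness parameters of $\{\widehat{\phi_n}\}$ are uniformly controlled. Furthermore,
\[
\|\widehat{\phi_n}\|_\infty \ \le\ \int_{B(0,p^{\ell'})}|\phi_n(x)|\,dx\ \le\ p^{\ell'}\|\phi_n\|_\infty\ \longrightarrow\ 0,
\]
so $\widehat{\phi_n}\to 0$ uniformly. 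Thus $\{\widehat{\phi_n}\}$ is a null sequence in $\mathcal{D}$ and $\mathcal{F}$ is continuous. Since $\mathcal{F}^{-1}f(\xi)=\widehat{f}(-\xi)$ and reflection obviously preserves null sequences, the same argument gives continuity of $\mathcal{F}^{-1}$.

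\medskip

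\textbf{Main obstacle.} There is no real obstacle: once Proposition \ref{2case} and the explicit formula (\ref{FB}) are in hand, every ingredient is essentially a bookkeeping of the two parameters $\ell$ and $\ell'$. The only step that requires some care is verifying that the Fourier inversion formula holds pointwise on $\mathcal{D}$ (rather than just in $L^2$), but this reduces to the single computation on $1_{B(c,p^k)}$ supplied by (\ref{FB}), so it is routine.
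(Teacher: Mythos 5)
Your proof is correct and follows exactly the route the paper itself indicates (it cites Taibleson and remarks that the statement is a direct consequence of the compact-support/local-constancy duality of Proposition \ref{2case} and of formula (\ref{FB})); you have simply filled in the parameter bookkeeping, the inversion computation on the indicators $1_{B(c,p^k)}$, and the null-sequence check. Nothing further is needed.
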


 The {\em Fourier transform of a distribution} $ f\in \mathcal{D}'$ is a new distribution  $ \widehat{f}\in \mathcal{D}'$ defined by the duality
$$
\langle\widehat{f},\phi\rangle=\langle f,\widehat{\phi}\rangle, \quad \forall \phi \in \mathcal{D}.
$$
The Fourier transformation $f\mapsto \widehat{f}$ is a homeomorphism of $\mathcal{D}'$ onto $\mathcal{D}'$ under the weak topology \cite[ Chapter II 3]{t}.

\subsection{Zeros of the Fourier transform  of  a discrete  measure}

Let $f \in \mathcal{D}'$ be a distribution in $\Q_p$.  A  point $x\in \Q_p$ is  called a {\em  zero} of  $f$ if there exists an integer $n_0$
  such that  $$ \langle f, 1_{B(y,p^{n})}\rangle=0, \quad \text {for all }  y\in B(x,p^{n_0})  \text{ and all integers  }  n\leq  n_0 \text.$$
Denote by  $\mathcal{Z}_f$ the set of all zeros of $f$.
Remark that 
 $\mathcal{Z}_f$ is  the maximal open set $O$ on which $f$ vanishes, i.e.
 $\langle f, \phi\rangle=0$ for all $\phi \in \mathcal{D}$ such that the support of $\phi$ is contained in $O$.
 The {\em support}  of a distribution $f$ is defined as  the complementary set of  $\mathcal{Z}_f$ and is denoted by  $\operatorname{supp}(f)$.

Let $E$ be a  uniformly discrete set in $\Q_p$. The following proposition characterizes the structure of  $\ZE$, the set of zeros of the
Fourier transform of the discrete  measure $\mu_E$. It is bounded and is a union of spheres centered at $0$.
\begin{prop}\label{zeroofE} 
Let $E$ be a  uniformly discrete set in $\Q_p$.\\
\indent {\rm (1)} 
If $\xi\in \ZE$,  then $S(0,|\xi|_p)\subset \ZE$.\\
\indent {\rm (2)} The set $\ZE$ is bounded.
\end{prop}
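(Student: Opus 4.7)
The plan is to unpack $\xi \in \ZE$ into concrete character-sum identities and then invoke Lemma \ref{zmod}. Using (\ref{FB}) I compute
\[
\langle \widehat{\mu_E}, 1_{B(y, p^n)}\rangle = \langle \mu_E, \widehat{1_{B(y,p^n)}}\rangle = p^n \sum_{\lambda \in E \cap B(0, p^{-n})} \chi(-\lambda y),
\]
a finite sum by uniform discreteness. Writing $S_n(y) := \sum_{\lambda \in E \cap B(0, p^{-n})} \chi(-\lambda y)$, the definition becomes: $\xi \in \ZE$ iff there exists $n_0 \in \Z$ such that $S_n(y) = 0$ for every $y \in B(\xi, p^{n_0})$ and every $n \leq n_0$. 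Observe that decreasing $n_0$ weakens this requirement (smaller ball of $y$'s, more restrictive range of $n$'s), so I may freely shrink $n_0$ while the sets $E\cap B(0,p^{-n})$ only grow.

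For part (1), I fix $\xi' \in S(0, |\xi|_p)$ and write $\xi' = u\xi$ with $u = \xi'/\xi \in \Z_p^\times$. Multiplication by $u$ is an isometry on $\Q_p$ that carries $B(\xi, p^{n_0})$ bijectively onto $B(\xi', p^{n_0})$. For a typical $y' = uy$ in the target ball and any $n \leq n_0$, I rewrite $S_n(y') = \sum_{\lambda} \chi\bigl(u \cdot (-\lambda y)\bigr)$ and apply Lemma \ref{zmod}(1) to the finite set $C = \{-\lambda y : \lambda \in E \cap B(0, p^{-n})\}$: since $\sum_{c \in C} \chi(c) = S_n(y) = 0$, the lemma yields $\sum_{c \in C} \chi(uc) = 0$, i.e.\ $S_n(y') = 0$. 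Hence the same $n_0$ certifies $\xi' \in \ZE$, so the whole sphere $S(0, |\xi|_p)$ lies in $\ZE$.

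For part (2), I may assume $E$ is non-empty. Let $N \in \Z$ be a uniform-discreteness parameter, so that $|\lambda - \lambda'|_p \geq p^{-N}$ for all distinct $\lambda, \lambda' \in E$. Given $\xi \in \ZE$, I shrink $n_0$ until $E_{n_0} := E \cap B(0, p^{-n_0})$ becomes non-empty (possible since $E\cap B(0,p^{-n})$ grows to $E$ as $n \to -\infty$). Evaluating the vanishing at $y = \xi$ and $n = n_0$ gives $\sum_{\lambda \in E_{n_0}} \chi(\xi \cdot (-\lambda)) = 0$, and Lemma \ref{zmod}(2) applied to $C = -E_{n_0}$ then produces, for each $\lambda \in E_{n_0}$, a distinct $\lambda' \in E_{n_0}$ with $|\lambda - \lambda'|_p = p/|\xi|_p$. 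Combined with uniform discreteness, $p/|\xi|_p \geq p^{-N}$, hence $|\xi|_p \leq p^{N+1}$ and $\ZE \subset B(0, p^{N+1})$.

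The main technical obstacle is careful bookkeeping around the scale parameter $n_0$: one must observe that $n_0$ from the distributional definition can be made arbitrarily small, so that the same $n_0$ works simultaneously for all points on a given sphere and so that the character sum $S_{n_0}(\xi)$ contains enough terms for Lemma \ref{zmod}(2) to yield a nontrivial closeness relation among the $\lambda$'s. Once that is nailed down, the two parts are direct applications of Lemma \ref{zmod}(1) and Lemma \ref{zmod}(2), respectively.
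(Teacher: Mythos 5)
Your proof is correct and follows essentially the same route as the paper: convert the distributional vanishing into finite character sums over $E\cap B(0,p^{-n})$ via (\ref{FB}), then apply Lemma \ref{zmod}(1) for the sphere statement and Lemma \ref{zmod}(2) together with the uniform separation of $E$ for boundedness. The only (harmless) differences are cosmetic: you verify the vanishing at every point of the ball around $\xi'$ rather than just at its center, and you prove (2) directly instead of by contraposition.
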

\begin{proof}
 First remark that by using (\ref{FB}) we get  
	\begin{equation}\label{FofMu}
		\langle \widehat{\mu_E}, 1_{B(\xi,p^{-n})} \rangle
		=\langle \mu_E, \widehat{1_{B(\xi,p^{-n})}} \rangle
		=p^{-n}\sum_{\lambda\in E \cap B(0, p^{n})} \overline{\chi(\xi \lambda)}.
	\end{equation}  
	This expression will be used several times.
	
	\indent {\rm (1)} 
By definition,  $\xi\in \ZE$
implies that
there exists an integer $n_0$ such that 
 $$\langle \widehat{\mu_E}, 1_{B(\xi,p^{-n})} \rangle=0 , \quad \forall~ n\geq n_0. $$
By (\ref{FofMu}), this is equivalent to 
 \begin{equation}\label{zeross}
 \sum_{\lambda\in E \cap B(0, p^{n})}\chi(\xi\lambda)=0 ,\quad \forall~ n\geq n_0. 
 \end{equation}	
For any  $\xi^{\prime}\in S(0,|\xi|_p)$,  we have $\xi' = u \xi$ for some $u\in \mathbb{Z}_p^\times$.  By Lemma \ref{zmod} (1) and  the equality (\ref{zeross}), we obtain
	$$
	\sum_{\lambda\in E \cap B(0, p^{n})}\chi(\xi^{\prime}\lambda)= \sum_{\lambda\in E \cap B(0, p^{n})}\chi(\xi\lambda)=0,\quad \forall~ n\geq n_0. 
	$$
	 Thus, again by (\ref{FofMu}), $\langle \widehat{\mu_E}, 1_{B(\xi',p^{-n})} \rangle=0 $ for $ n\geq n_0$. 
	 We have thus proved $S(0,|\xi|_p)\subset\ZE$.
%

 \indent {\rm (2)} Fix  $\lambda_0 \in E$. By the  discreteness of $E$,  there exists an integer $n_0$ such that 
$$\forall \lambda \in E\setminus \{\lambda_0\}, \quad |\lambda-\lambda_0|_p\geq p^{-n_0}.$$
We are going to show that  $\ZE\subset B(0,p^{n_0+1})$, which is equivalent to that $\xi \not \in \ZE$ when $|\xi|_p\geq p^{n_0+2}$. To this end, we will prove that for all integer $n$ large enough such that $\lambda_0 \in B(0, p^n)$, we have
$$\langle \widehat{\mu_E}, 1_{B(\xi,p^{-n})} \rangle \neq 0.$$
In fact, if this is not the case, then by (\ref{FofMu}), 
\[ \sum_{\lambda\in E \cap B(0, p^{n})}\chi(\xi\lambda)=0. \]
Thus, by Lemma \ref{zmod} (2), for the given $\lambda_0$, we can find $\lambda \in E \cap B(0, p^{n})$, such that $|\lambda-\lambda_0|_p=p/|\xi|_p <p^{-n_0}$,  a contradiction.

\end{proof}
Remark that $n_0$  in the above proof depends only on the structure of $E$.  Define 
\begin{align}\label{def-n_E}
n_E:=\max_{\substack {\lambda, \lambda^{\prime}\in E \\  \lambda\neq \lambda^{\prime}} } v_p(\lambda- \lambda^{\prime}).
\end{align}
According to the above proof of the second assertion of Proposition  \ref{zeroofE}, we immediately get 
\begin{align}\label{nE}
\ZE\subset   B(0,p^{n_E+1}).
\end{align}

\subsection{Convolution and  multiplication of   distributions}
Denote $$
\Delta_k:=1_{B(0,p^k)}, \quad
\theta_k:=\widehat{\Delta}_k=p^k \cdot 1_{B(0,p^{-k})}.
$$
Let $f,g\in \mathcal{D}'$
 be two distributions. We define the {\em convolution} of $f$ and $g$ by
$$
\langle f*g,\phi \rangle =\lim\limits_{k\to \infty}	 \langle f(x), \langle g(\cdot),\Delta_k(x)\phi(x+\cdot) \rangle \rangle,
$$
if the limit exists for all $\phi \in \mathcal{D}$. 

{
\begin{prop} [\cite{Aks}, Proposition  4.7.3  ] If $f\in \mathcal{D}^{\prime}$, then  $f*\theta_k\in \mathcal{E}$
with  the
parameter of constancy at least  $-k$.
\end{prop}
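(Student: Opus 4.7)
The plan is to identify the convolution $f * \theta_k$ pointwise as a classical function and then exploit the ultrametric structure of $\Q_p$. First I would unwind the distributional definition of $f * g$ with $g = \theta_k \in \mathcal{D}$. For any $\phi \in \mathcal{D}$, the inner pairing $\langle \theta_k(z), \Delta_m(x)\phi(x+z)\rangle = \Delta_m(x)\int_{\Q_p} \theta_k(z)\phi(x+z)\,dz$ is a test function in $x$ whose support stabilizes once $m$ is large enough that $B(0,p^m)$ contains $\mathrm{supp}(\phi) + \mathrm{supp}(\theta_k)$. Hence the cutoff $\Delta_m$ becomes superfluous, the limit exists, and
\[
\langle f * \theta_k, \phi\rangle = \langle f(x), \langle \theta_k(z), \phi(x+z)\rangle\rangle = \int_{\Q_p} F(x)\phi(x)\,dx,
\]
where $F(x) := \langle f(y), \theta_k(x-y)\rangle$. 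Thus $f * \theta_k$ is represented by the function $F$, and it remains to study $F$.

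Next I would read off the structure of $F$ from the explicit form of $\theta_k$. Since $\theta_k(x-y) = p^k 1_{B(0,p^{-k})}(x-y) = p^k 1_{B(x,p^{-k})}(y)$ as a function of $y$,
\[
F(x) = p^k\, \langle f,\, 1_{B(x,p^{-k})}\rangle.
\]
Now I invoke the ultrametric identity: if $|x-x'|_p \le p^{-k}$, then for any $y$ with $|y-x|_p \le p^{-k}$ one has $|y-x'|_p \le \max(|y-x|_p,|x-x'|_p) \le p^{-k}$, and symmetrically. Hence $B(x,p^{-k}) = B(x',p^{-k})$, so the two test functions $1_{B(x,p^{-k})}$ and $1_{B(x',p^{-k})}$ coincide, yielding $F(x) = F(x')$. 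This shows $F$ is constant on every ball of radius $p^{-k}$, i.e., $F \in \mathcal{E}$ with parameter of constancy at least $-k$.

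The only delicate step is the first one, the rigorous passage from the truncated limit in the definition of convolution to the pointwise formula $F(x) = \langle f(y), \theta_k(x-y)\rangle$; but once $\theta_k$ is recognized as a compactly supported locally constant function, the two iterated pairings decouple by continuity of $f$ on $\mathcal{D}$ and the stabilization of the support described above. The second step, which is really the heart of the proposition, then reduces to the one-line ultrametric observation that closed balls of equal radius are either disjoint or identical — a uniquely $p$-adic phenomenon that gives such a clean quantitative bound on the parameter of constancy.
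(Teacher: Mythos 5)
Your proof is correct, and it is the standard argument for this fact: the paper itself gives no proof but simply cites \cite{Aks}, Proposition 4.7.3, whose proof proceeds exactly as you do, by identifying $(f*\theta_k)(x)$ with $p^k\langle f, 1_{B(x,p^{-k})}\rangle$ and using that two balls of radius $p^{-k}$ at distance $\le p^{-k}$ coincide. The justification of the interchange in your first step is unproblematic here because $\phi$ is a finite linear combination of indicators of balls, so the inner integral is a finite sum and linearity of $f$ suffices.
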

}

We define the
{\em  multiplication} of $f$ and $g$ by
$$
\langle f\cdot g,\phi \rangle =\lim\limits_{k\to \infty}	 \langle g, ( f*\theta_k)\phi  \rangle,
$$
if the limit exists for all $\phi \in \mathcal{D}$.        The above definition of convolution is
compatible with the usual convolution of two integrable functions and the definition of multiplication is compatible with  the usual  multiplication of two locally integrable functions.

The following proposition shows that both the convolution and the multiplication are commutative when they are well defined and the convolution of two distributions is well defined if and only if the multiplication 
of their Fourier transforms is well defined. 

\begin{prop} [\cite{Vvz}, Sections 7.1 and 7.5] \label{Conv-Mul} Let $f, g\in \mathcal{D}'$ be two distributions. Then\\
\indent {\rm (1)} \  If $f*g$ is well defined, so is $g*f$ and  $f*g=g*f$.\\
\indent {\rm (2)} \ If $ f\cdot g$ is well defined, so is  $g\cdot f$
 and  $ f\cdot g= g\cdot f$.\\
 \indent {\rm (3)} \ $f*g$ is well defined  if and only   $\widehat{f}\cdot \widehat{g}$ is well defined. In this case, we have
$
\widehat{f*g}=\widehat{f}\cdot\widehat{g}$ and
$ \widehat{f\cdot g}=\widehat{f}*\widehat{g}.
$
\end{prop}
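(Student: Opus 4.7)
The plan is to prove (1) and (3) directly and derive (2) by transporting through the Fourier transform. For (1), fix $\phi\in\mathcal{D}$ with parameter of compactness $\ell'$ and take $k\ge\ell'$. The key step is that inside the defining limit the one-sided cutoff $\Delta_k(x)$ can be replaced by the symmetric cutoff $\Delta_k(x)\Delta_k(y)$: writing $\Delta_k(x)=\Delta_k(x)\Delta_k(y)+\Delta_k(x)(1-\Delta_k(y))$, the second piece makes no contribution, because if $x\in B(0,p^k)$, $y\notin B(0,p^k)$, and $x+y\in\operatorname{supp}\phi\subset B(0,p^{\ell'})$, the ultrametric inequality forces $y=(x+y)-x\in B(0,p^k)$, a contradiction. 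Hence
$$\langle f*g,\phi\rangle=\lim_{k\to\infty}\bigl\langle f(x),\bigl\langle g(y),\Psi_k(x,y)\bigr\rangle_y\bigr\rangle_x,\qquad \Psi_k(x,y):=\Delta_k(x)\Delta_k(y)\phi(x+y),$$
and $\Psi_k$ is a genuine test function on $\Q_p\times\Q_p$. The Fubini theorem for Bruhat--Schwartz distributions on $\Q_p\times\Q_p$ (which reduces, via density of elementary tensors in $\mathcal{D}(\Q_p\times\Q_p)$, to the trivial identity on tensor product kernels) interchanges the order of the $f$- and $g$-pairings. Running the same symmetrization on $\langle g*f,\phi\rangle$ produces the swapped iterated pairing, so Fubini closes the loop.

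For (3), I would first verify the identity $\widehat{f*g}=\widehat f\cdot\widehat g$ for the classical case $f,g\in\mathcal{D}$, where it follows from an elementary direct calculation. For general distributions I would argue that the regularizations appearing in the two definitions correspond under Fourier transform: the $\Delta_k$-cutoffs in the definition of $f*g$ are exchanged, via $\widehat{\Delta_k}=\theta_k$ and the Parseval identity $\langle h,\phi\rangle=\langle\widehat h,\widehat\phi\rangle$, with the $\theta_k$-smoothings $\widehat f*\theta_k$ in the definition of $\widehat f\cdot\widehat g$. The $k$-th term of the defining limit of $\langle f*g,\phi\rangle$ therefore converts termwise into the $k$-th term of $\langle\widehat f\cdot\widehat g,\widehat\phi\rangle$. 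Since $\phi\mapsto\widehat\phi$ is a bijection of $\mathcal{D}$, the existence of either limit for all $\phi$ is equivalent to that of the other, yielding both the equivalence of well-definedness and the formula $\widehat{f*g}=\widehat f\cdot\widehat g$. The dual identity $\widehat{f\cdot g}=\widehat f*\widehat g$ drops out by substituting $(\widehat f,\widehat g)$ for $(f,g)$ and applying Fourier inversion on $\mathcal{D}'$.

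Part (2) is then immediate: if $f\cdot g$ is well defined, (3) gives $\widehat f*\widehat g$ well defined, (1) gives $\widehat g*\widehat f$ well defined and equal, and (3) again pulls back to $g\cdot f=f\cdot g$. I expect the main obstacle to be the clean bookkeeping in (3): one has to rigorously interpret the iterated distributional pairings as pairings against the tensor product $f\otimes g\in\mathcal{D}'(\Q_p\times\Q_p)$ and verify termwise that the Fourier transform intertwines the two regularization procedures. The $p$-adic setting substantially simplifies this compared with $\mathbb{R}^n$: the ultrametric inequality makes support arguments instantaneous, and the explicit formula \eqref{FB} shows exactly how $\widehat{\cdot}$ acts on the building blocks $\Delta_k$.
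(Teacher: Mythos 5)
The paper supplies no proof of this proposition; it is imported verbatim from \cite{Vvz}, Sections 7.1 and 7.5, so there is no internal argument to compare yours against. Judged on its own, your outline is a correct reconstruction from the paper's definitions, and the two load-bearing steps are exactly the ones you isolate. First, the symmetrization of the cutoff: for $k\ge \ell'(\phi)$, the conditions $x\in B(0,p^k)$ and $x+y\in B(0,p^{\ell'})\subset B(0,p^k)$ force $y\in B(0,p^k)$ by the ultrametric inequality, so $\Delta_k(x)\phi(x+y)=\Delta_k(x)\Delta_k(y)\phi(x+y)$, and commutativity reduces to Fubini for $f\otimes g$. In the $p$-adic setting this Fubini is genuinely trivial, and in fact you do not even need density of elementary tensors: balls of $\Q_p\times\Q_p$ are rectangles, so every element of $\mathcal{D}(\Q_p\times\Q_p)$ is a \emph{finite} linear combination of products $1_{B(a,p^m)}(x)1_{B(b,p^m)}(y)$, and both iterated pairings collapse to the same finite sum $\sum_i\langle f,\phi_i\rangle\langle g,\psi_i\rangle$. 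Second, for (3) the precise intertwining identity is $(\widehat f*\theta_k)(\xi)=\langle f(x),\Delta_k(x)\overline{\chi(\xi x)}\rangle$ (a truncated Fourier transform, obtained from (\ref{FB}) and $\langle\widehat f,\psi\rangle=\langle f,\widehat\psi\rangle$ applied to $\psi=\theta_k(\xi-\cdot)$), which gives $\widehat{(\widehat f*\theta_k)\phi}(y)=\langle f(x),\Delta_k(x)\widehat\phi(x+y)\rangle$ and hence matches the $k$-th term of $\langle\widehat f\cdot\widehat g,\phi\rangle$ with the $k$-th term of $\langle f*g,\widehat\phi\rangle$ after the symmetrization of the first step. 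Two small points to tighten when you write this out: the termwise correspondence pairs $\langle f*g,\widehat\phi\rangle$ with $\langle\widehat f\cdot\widehat g,\phi\rangle$, not $\langle f*g,\phi\rangle$ with $\langle\widehat f\cdot\widehat g,\widehat\phi\rangle$ as you wrote --- the transform sits on one side only, and the residual discrepancy is the harmless reflection coming from $\widehat{\widehat h}=h(-\cdot)$; and your deduction of (2) from (1) and (3) silently uses that well-definedness of a product is preserved under $h\mapsto h(-\cdot)$, which is true but deserves a sentence. With those repairs the argument is complete.
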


The following proposition justifies an intuition. 

 \begin{prop} \label{zeroproduct}
Let  $f, g\in \mathcal{D}'$ be two distributions. If $\operatorname{supp}(f) \cap \operatorname{supp}(g) =\emptyset $, then $f\cdot g$ is well defined and 
$f\cdot g =0$.
\end{prop}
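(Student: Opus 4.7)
The plan is to reduce the statement to showing that, for every test function $\phi \in \mathcal{D}$, the scalar $\langle g, (f * \theta_k)\phi\rangle$ vanishes for all sufficiently large $k$; this immediately yields both the existence of the defining limit and the identity $\langle f \cdot g, \phi\rangle = 0$, hence $f \cdot g = 0$. The whole argument rests on localising the support of the smoothed object $f * \theta_k$ to a small neighbourhood of $\operatorname{supp}(f)$.

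The first step I would carry out is to read off the support of $f * \theta_k$ directly from the definitions. Since $\theta_k = p^k \cdot 1_{B(0,p^{-k})}$, unwinding the convolution of a distribution with a test function yields the pointwise identity $(f * \theta_k)(x) = p^k \langle f, 1_{B(x, p^{-k})}\rangle$. Whenever $B(x, p^{-k}) \subset \mathcal{Z}_f$ this pairing vanishes by the very definition of $\mathcal{Z}_f$, so the nonzero locus of $f * \theta_k$ is contained in $\operatorname{supp}(f) + B(0, p^{-k})$. Combined with the compact support of $\phi$, the product $(f * \theta_k)\phi$ lies in $\mathcal{D}$ (locally constant $\times$ locally constant of compact support) and is supported in
$$K_k := \bigl(\operatorname{supp}(f) + B(0, p^{-k})\bigr) \cap \operatorname{supp}(\phi).$$

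Next I would show, by a soft compactness and ultrametric argument, that $K_k \cap \operatorname{supp}(g) = \emptyset$ for all $k$ sufficiently large. If this fails, then for some sequence $k_j \to \infty$ there exist points $y_j \in K_{k_j} \cap \operatorname{supp}(g)$ with decompositions $y_j = x_j + b_j$, $x_j \in \operatorname{supp}(f)$, $|b_j|_p \leq p^{-k_j}$. Compactness of $\operatorname{supp}(\phi)$ extracts a subsequence along which $y_j \to y_\infty \in \operatorname{supp}(\phi) \cap \operatorname{supp}(g)$; since $b_j \to 0$ the corresponding $x_j$ converge to the same $y_\infty$, and closedness of $\operatorname{supp}(f)$ forces $y_\infty \in \operatorname{supp}(f) \cap \operatorname{supp}(g)$, contradicting the hypothesis. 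Once the separation is known, $(f * \theta_k)\phi$ is a test function supported in the open set $\mathcal{Z}_g$, so $\langle g, (f * \theta_k)\phi\rangle = 0$, completing the argument.

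The main potential obstacle is the first step: the pointwise identification of $f * \theta_k$ as an element of $\mathcal{E}$ and the precise transition from the distributional vanishing defining $\mathcal{Z}_f$ to a statement about the nonzero locus of the smoothed function. Once this localisation is in place, everything afterward is a clean ultrametric compactness argument made easier by the fact that the ``thickening'' $\operatorname{supp}(f) + B(0, p^{-k})$ of a closed set by a ball of radius $p^{-k}$ is itself a union of clopen balls of that radius.
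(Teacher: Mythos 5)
Your argument is correct, and it takes a genuinely different route from the paper's. The paper does not localise the support of the regularisation $f*\theta_k$ at all; instead it constructs a compact open set $X$ with $\operatorname{supp}(f)\cap B(0,p^{\ell'})\subset X\subset \mathcal{Z}_g\cap B(0,p^{\ell'})$, splits the test function as $\phi=(\phi-\phi_1)+\phi_1$ with $\phi_1=\phi\cdot 1_X$, and then kills each piece using only the trivial inclusion $\operatorname{supp}\bigl((f*\theta_k)\phi_1\bigr)\subset\operatorname{supp}(\phi_1)\subset\mathcal{Z}_g$ (and symmetrically $\operatorname{supp}\bigl((g*\theta_k)(\phi-\phi_1)\bigr)\subset\mathcal{Z}_f$). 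The price of that splitting is that the term for $\phi-\phi_1$ is evaluated via $\lim_k\langle f,(g*\theta_k)(\phi-\phi_1)\rangle$, i.e.\ via the definition of $g\cdot f$ rather than $f\cdot g$, so an appeal to commutativity is implicitly built in. Your proof avoids the splitting entirely: the identity $(f*\theta_k)(x)=p^k\langle f,1_{B(x,p^{-k})}\rangle$ together with the characterisation of $\mathcal{Z}_f$ as the maximal open set on which $f$ vanishes gives that the nonzero locus of $f*\theta_k$ lies in $\operatorname{supp}(f)+B(0,p^{-k})$, and your compactness argument (which plays the same role as the paper's observation that the two truncated supports are at positive distance) shows this thickening misses $\operatorname{supp}(g)$ inside $\operatorname{supp}(\phi)$ for large $k$. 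This establishes $\langle g,(f*\theta_k)\phi\rangle=0$ eventually, which is exactly the defining limit for $f\cdot g$, so you get well-definedness and vanishing in one stroke without invoking commutativity. The only point you flag as delicate --- the pointwise identification of $f*\theta_k$ --- is indeed the one thing to verify against the paper's definition of convolution, but it checks out (it is the content of the cited Proposition 4.7.3 and of the regularisation $\Delta_k\cdot(f*\theta_k)$ used throughout Section 2), so the proof is complete.
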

\begin{proof}
Let $\phi\in \mathcal{D}$ with parameter of  constancy   $\ell$ and parameter of compactness $\ell^{\prime}$. 
By the assumption $\operatorname{supp}(f) \cap \operatorname{supp}(g) =\emptyset $, the distance between
the two compact sets $\operatorname{supp}(f) \cap B(0,p^{\ell^{\prime}})$ and $\operatorname{supp}(g) \cap B(0,p^{\ell^{\prime}})$ is strictly positive.
So there exists  an integer $n\leq \ell'$  such that  for any
$x \in \operatorname{supp}(f) \cap B(0,p^{\ell^{\prime}})$, the ball $B(x,p^{n})$ is contained in $\mathcal{Z}_g\cap B(0,p^{\ell^{\prime}})$.
Therefore, the  compact open  set   $$X:=\bigcup_{x\in\operatorname{supp}(f) \cap B(0,p^{\ell^{\prime}})}B(x,p^{-n})$$ 
satisfies $\operatorname{supp}(f)\cap B(0,p^{\ell^\prime}) \subset X \subset   \mathcal{Z}_g\cap B(0,p^{\ell^{\prime}})$.
 
 Define $\phi_1(x) :=\phi(x) \cdot 1_X(x)$. 
It follows that  $$\operatorname{supp}(\phi-\phi_1)\ \subset \ B(0,p^{\ell^{\prime}}) \setminus X \ \subset \  \mathcal{Z}_{f}\cap B(0,p^{\ell^{\prime}}).$$
 Thus, we have
 \begin{align*}
  \langle f\cdot g,\phi \rangle = & \langle  f\cdot g,( \phi-\phi_1+\phi _1)\rangle =\langle  f\cdot g,\phi-\phi_1\rangle +\langle f\cdot g,  \phi _1\rangle \\
    = &\lim\limits_{k\to \infty}	 \langle f, ( g*\theta_k)(\phi-\phi_1)  \rangle +\lim\limits_{k\to \infty}	 \langle g, ( f*\theta_k)\phi_1  \rangle \\
    =&  0,
    \end{align*}
    where the existences of the  last two limits   are due to  
    $$\operatorname{supp} ((g*\theta_k)(\phi-\phi_1)) \subset \operatorname{supp}(\phi-\phi_1) \subset \mathcal{Z}_{f},$$
and $$\operatorname{supp} ((f*\theta_k)\phi_1) \subset \operatorname{supp}\phi_1 \subset \mathcal{Z}_{g}.$$
\end{proof}


The multiplication of some special distributions   has a simple form. That is the case for  the multiplication of a uniformly locally constant function and a distribution.

\begin{prop}[\cite{Vvz} Section 7.5, Example 2]\label{prod}
	Let $f\in \mathcal{E}$ and let $G\in \mathcal{D}'$. Then for any $\phi\in \mathcal{D}$, we have
	$
	\langle f\cdot G, \phi \rangle = \langle G, f\phi \rangle.
	$
\end{prop}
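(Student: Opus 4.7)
The plan is to unpack the definition of multiplication and exploit the crucial non-Archimedean feature that convolution of a uniformly locally constant function against the averaging kernel $\theta_k$ stabilizes for large $k$, rather than merely approximating in the limit.

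First, since $f \in \mathcal{E}$, there is an integer $n$ such that $f(x+u)=f(x)$ for every $x\in\Q_p$ and every $u\in B(0,p^{n})$, i.e.\ $f$ is constant on every ball of radius $p^{n}$. Recall also that $\theta_k = p^{k}\cdot 1_{B(0,p^{-k})}$ is a probability density, so
$$(f*\theta_k)(x) \;=\; p^{k}\int_{B(0,p^{-k})} f(x-y)\,dy.$$
Here comes the key step: as soon as $k\geq -n$ we have $B(0,p^{-k})\subseteq B(0,p^{n})$, so the integrand equals $f(x)$ identically on the domain of integration. Using $\int\theta_k\,d\m=1$, this gives the exact identity $(f*\theta_k)(x)=f(x)$ for every $x\in\Q_p$ and every $k\geq -n$. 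No genuine limit is involved.

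Next I would check that $f\phi$ is a legitimate element of $\mathcal{D}$, so that the right-hand side $\langle G,f\phi\rangle$ makes sense. The product of a uniformly locally constant function and a test function is uniformly locally constant (its parameter of constancy is at least $\min(n,\ell(\phi))$), and it has compact support since $\phi$ does; hence $f\phi\in\mathcal{D}$.

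Combining these observations: for every $k\geq -n$,
$$\langle G,(f*\theta_k)\phi\rangle \;=\; \langle G, f\phi\rangle,$$
so the sequence in the definition of $f\cdot G$ is eventually constant. In particular the defining limit exists and equals $\langle G, f\phi\rangle$, which is exactly the claim. The proof has no real obstacle; the only subtlety is the ultrametric phenomenon that smoothing by $\theta_k$ is an honest identity on $\mathcal{E}$ once the radius of the kernel is small enough, which replaces the usual approximate-identity argument one would run in the Archimedean setting.
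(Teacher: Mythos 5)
Your proof is correct, and it is the standard argument: the paper itself does not prove this proposition but cites it from Vladimirov--Volovich--Zelenov, where the justification is exactly the ultrametric stabilization $(f*\theta_k)(x)=f(x)$ for all $k\ge -n$ that you isolate, together with the observation that $f\phi\in\mathcal{D}$. Nothing is missing.
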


 For a distribution $f\in \mathcal{D}'$, we define its
 {\em regularization} by the sequence of test functions  (\cite[ Proposition  4.7.4]{Aks})
$$
\Delta_k \cdot(f*\theta_k) \in \mathcal{D}.
$$

The regularization of a distribution  converges  to the distribution  
with respect to the weak topology.

\begin{prop}[\cite{Aks} Lemma 14.3.1]\label{limit}
	Let $f$ be a distribution in $\mathcal{D}'$.
	Then $\Delta_k \cdot (f*\theta_k)\to f$ in $\mathcal{D}'$ as $k\to \infty$. Moreover, for any test function $\phi\in \mathcal{D}$ we have
	$$
	\langle \Delta_k \cdot(f*\theta_k), \phi \rangle=\langle f, \phi \rangle, \quad \forall k\ge \max\{-\ell,\ell^{\prime}\},
	$$
	where $\ell$ and $\ell^{\prime}$ are the parameter of constancy and the  parameter of compactness of the function $\phi$ defined in Subsection \ref{subsec2.4}. 
\end{prop}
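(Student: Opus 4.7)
The plan is to compute $\langle \Delta_k\cdot(f*\theta_k),\phi\rangle$ explicitly and show that it \emph{equals} $\langle f,\phi\rangle$ for every $k\geq \max\{-\ell,\ell^\prime\}$; weak convergence of the regularizations to $f$ is then immediate, since for each fixed $\phi\in\mathcal{D}$ the pairing is eventually constant in $k$.

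First, I would identify the function $f*\theta_k$ concretely. By the proposition preceding the statement, $f*\theta_k\in\mathcal{E}$ with parameter of constancy at least $-k$; unwinding the definition of convolution and using $\theta_k(x-y)=p^k 1_{B(x,p^{-k})}(y)$, this function is
\begin{equation*}
(f*\theta_k)(x)=p^k\bigl\langle f,1_{B(x,p^{-k})}\bigr\rangle,
\end{equation*}
which is locally constant on cosets of $B(0,p^{-k})$. Next, I invoke Proposition \ref{prod} with the uniformly locally constant function $\Delta_k\in\mathcal{E}$ acting on the distribution (in fact function) $f*\theta_k$, obtaining
\begin{equation*}
\langle\Delta_k\cdot(f*\theta_k),\phi\rangle=\langle f*\theta_k,\Delta_k\phi\rangle.
\end{equation*}
When $k\geq\ell^\prime$, the support of $\phi$ lies in $B(0,p^{\ell^\prime})\subset B(0,p^k)$, so $\Delta_k\phi=\phi$; and because $f*\theta_k$ is a locally integrable function, the right-hand side reduces to the ordinary integral $\int_{\Q_p}(f*\theta_k)(x)\phi(x)\,dx$.

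Now I would exploit the uniform local constancy of $\phi$. When $k\geq -\ell$, $\phi$ is constant on every ball of radius $p^{-k}$, hence there is a finite set of representatives $Z\subset B(0,p^{\ell^\prime})$ such that $\phi=\sum_{z\in Z}\phi(z)\,1_{B(z,p^{-k})}$. The crucial observation is that $(f*\theta_k)$ is also constant on each such ball, so
\begin{align*}
\int_{\Q_p}(f*\theta_k)(x)\phi(x)\,dx
&=\sum_{z\in Z}\phi(z)\int_{B(z,p^{-k})} p^k\bigl\langle f,1_{B(x,p^{-k})}\bigr\rangle\,dx \\
&=\sum_{z\in Z}\phi(z)\cdot p^k\cdot p^{-k}\bigl\langle f,1_{B(z,p^{-k})}\bigr\rangle \\
&=\Bigl\langle f,\sum_{z\in Z}\phi(z)\,1_{B(z,p^{-k})}\Bigr\rangle
=\langle f,\phi\rangle,
\end{align*}
where the linearity of $f$ on the finite sum of test functions is used. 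This gives the claimed equality for every $k\geq\max\{-\ell,\ell^\prime\}$, and the weak convergence $\Delta_k\cdot(f*\theta_k)\to f$ in $\mathcal{D}^\prime$ follows at once.

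The main technical point, and the only nontrivial step, is justifying the explicit function-theoretic formula for $f*\theta_k$ from the limit definition of the convolution of distributions, together with the conversion of the multiplication $\Delta_k\cdot(f*\theta_k)$ into ordinary pointwise multiplication of functions via Proposition \ref{prod}. Once these identifications are in place, the remainder is a finite partition argument in the ultrametric topology, with no interchange-of-limits issue since the relevant sums are finite for each admissible $k$.
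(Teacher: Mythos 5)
Your proof is correct. Note that the paper does not prove this proposition at all --- it is quoted from \cite{Aks} (Lemma 14.3.1) --- so there is no internal argument to compare against; your computation is the standard one. The chain $\langle \Delta_k\cdot(f*\theta_k),\phi\rangle=\langle f*\theta_k,\Delta_k\phi\rangle=\langle f*\theta_k,\phi\rangle=\int (f*\theta_k)\phi\,d\mathfrak{m}$ is justified for $k\ge\ell'$ by Proposition \ref{prod} (with $\Delta_k\in\mathcal{D}\subset\mathcal{E}$ and $f*\theta_k\in\mathcal{E}\subset\mathcal{D}'$), and the final partition argument uses exactly that both $\phi$ (for $k\ge-\ell$) and $f*\theta_k$ (by the quoted consequence of \cite{Aks}, Proposition 4.7.3) are constant on balls of radius $p^{-k}$, with $\m(B(z,p^{-k}))=p^{-k}$ cancelling the factor $p^k$. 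The only point you rightly flag as needing care --- that the distribution $f*\theta_k$ is represented by the function $x\mapsto p^k\langle f,1_{B(x,p^{-k})}\rangle$ --- is precisely what the cited proposition on $f*\theta_k\in\mathcal{E}$ supplies, so the argument is complete.
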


This approximation of distribution by test functions allows us to construct a   space which is bigger than the space of distributions. This larger space is the Colombeau algebra, which will be presented below. Recall that in the space of Bruhat-Schwartz distributions, the convolution and the multiplication are not well defined for all couples of distributions.  But in the Colombeau algebra,  the convolution and the multiplication are well defined  and these two operations are associative. 

\subsection{Colombeau algebra of generalized functions}
Consider the set $\mathcal{P}: = \mathcal{D}^\mathbb{N}$ of all sequences $\{f_k\}_{k\in \mathbb{N}}$ of test functions.  We introduce an algebra
structure on $\mathcal{P}$, defining the operations component-wisely
$$
\{f_{k}\}+\{ g_{k}\}=\{f_{k}+g_{k}\},
$$
$$
\{f_{k}\}\cdot\{g_{k}\}=\{f_{k}\cdot g_{k}\},
$$
where $\{f_{k}\}, \{g_{k}\}\in \mathcal{P}$. 

 Let $\mathcal{N}$ be the sub-algebra of elements $\{f_k\}_{k\in \mathbb{N}}\in \mathcal{P}$  such that for any compact set $K\subset\Q_p$ there exists $N\in \mathbb{N}$ such that   $f_{k}(x)=0$ for all $k\geq N, x\in K$. Clearly, $\mathcal{N}$ is an ideal in the algebra $\mathcal{P}$.   
 
  Then we introduce the  {\em  Colombeau-type algebra}
$$
\mathcal{G}=\mathcal{P}/\mathcal{N}.
$$
 The equivalence class of sequences which defines an element in  $ \mathcal{G}$ will be denoted by $\mathbf{f}=[f_k]$, called a generalized function.
 
 For any $\mathbf{f}=[f_k], \mathbf{g}=[g_k]\in \mathcal{G}$, the addition   and multiplication  are defined as  
 $$
 \quad \mathbf{f}+\mathbf{g}=[f_k+g_k],  \quad \mathbf{f}\cdot\mathbf{g}=[f_k\cdot g_k].
$$

Obviously,  $(\mathcal{G}, +, \cdot)$     is an associative and commutative algebra.

\begin{thm}[\cite{Aks} Theorem 14.3.3]
	The map $f\mapsto \mathbf{f}=[\Delta_k (f*\theta_k)]$ from  $\mathcal{D}'$  to $\mathcal{G}$ is  a linear embedding.
\end{thm}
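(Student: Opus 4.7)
The plan is to verify three things in order: that the map is well defined as a map into $\mathcal{P}$, that it is linear, and that it descends to an injective map into the quotient $\mathcal{G}=\mathcal{P}/\mathcal{N}$. The key tool throughout will be Proposition \ref{limit}, which asserts pointwise (distributional) convergence of the regularization $\Delta_k\cdot(f*\theta_k)$ to $f$ and, more importantly, gives exact equality $\langle \Delta_k\cdot(f*\theta_k),\phi\rangle=\langle f,\phi\rangle$ once $k$ exceeds a threshold determined by the parameters of constancy and compactness of $\phi$.

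For well-definedness, I would note that $\theta_k=p^k\cdot 1_{B(0,p^{-k})}$ is a test function, so by the proposition preceding the multiplication definition, $f*\theta_k\in\mathcal{E}$ is uniformly locally constant with parameter of constancy at least $-k$. Multiplying by the compactly supported test function $\Delta_k=1_{B(0,p^k)}$ (which in the sense of Proposition \ref{prod} is a legitimate multiplication of a distribution by an element of $\mathcal{E}$) yields a uniformly locally constant function of compact support, hence an element of $\mathcal{D}$. Thus $\{\Delta_k(f*\theta_k)\}_{k\in\mathbb{N}}\in\mathcal{P}$, and passing to the class in $\mathcal{G}$ is legitimate. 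Linearity of $f\mapsto[\Delta_k(f*\theta_k)]$ is immediate from bilinearity of convolution and multiplication in the relevant distributional sense, together with the component-wise operations defining addition in $\mathcal{G}$.

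The heart of the matter is injectivity. Suppose $f\in\mathcal{D}'$ satisfies $[\Delta_k(f*\theta_k)]=0$ in $\mathcal{G}$; by definition of $\mathcal{N}$, for every compact $K\subset\Q_p$ there exists $N\in\mathbb{N}$ with $\Delta_k(f*\theta_k)(x)=0$ for all $k\geq N$ and all $x\in K$. Given an arbitrary $\phi\in\mathcal{D}$, take $K=B(0,p^{\ell'(\phi)})$, which contains $\operatorname{supp}(\phi)$. Then for $k\geq N$ the function $\Delta_k(f*\theta_k)$ vanishes on $K$, so $\langle\Delta_k(f*\theta_k),\phi\rangle=0$. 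Enlarging $k$ if necessary so that $k\geq\max\{-\ell(\phi),\ell'(\phi)\}$, Proposition \ref{limit} gives $\langle f,\phi\rangle=\langle\Delta_k(f*\theta_k),\phi\rangle=0$. Since $\phi\in\mathcal{D}$ was arbitrary, $f=0$ in $\mathcal{D}'$, proving injectivity.

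The only step requiring a moment of thought is the injectivity argument, and it is not really an obstacle: Proposition \ref{limit} does all the work, in particular the fact that the equality $\langle\Delta_k(f*\theta_k),\phi\rangle=\langle f,\phi\rangle$ holds exactly (not merely asymptotically) for $k$ large enough depending on $\phi$. Everything else is bookkeeping about test function parameters and the definitions of $\mathcal{P}$, $\mathcal{N}$, and the multiplication/convolution of distributions.
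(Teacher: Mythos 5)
Your proof is correct. The paper gives no proof of this statement --- it is quoted directly from \cite{Aks} --- and your argument (well-definedness and linearity from the component-wise algebra structure on $\mathcal{P}$, injectivity by combining the definition of the ideal $\mathcal{N}$ with the exact equality $\langle \Delta_k\cdot(f*\theta_k),\phi\rangle=\langle f,\phi\rangle$ for $k\ge\max\{-\ell,\ell'\}$ from Proposition \ref{limit}) is the natural and standard one.
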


Each distribution $f\in \mathcal{D}'$ is embedded into 
$\mathcal{G}$ by the mapping which associates $f$ to the generalized function determined by the of $f$. 
Thus we obtain that the multiplication  defined on the $\mathcal{D}'$ is associative in the following sense.
\begin{prop}\label{associative}
 Let $f,g,h\in \mathcal{D}'$. If $(f\cdot g)\cdot h$ and $f\cdot (g\cdot h)$ are well defined as multiplications of distributions, we have  $$(f\cdot g)\cdot h =f\cdot (g\cdot h).$$

\end{prop}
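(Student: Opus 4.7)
The strategy is to transport the identity into the Colombeau algebra $\mathcal{G}$, where multiplication is associative by construction, and then to pull the equality back to $\mathcal{D}'$ using the injectivity of the linear embedding $\iota\colon \mathcal{D}'\hookrightarrow \mathcal{G}$, $f\mapsto [\Delta_k(f*\theta_k)]$.

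The core step is the following compatibility lemma: \emph{whenever the distributional product $f\cdot g$ is well defined in $\mathcal{D}'$, one has $\iota(f\cdot g) = \iota(f)\cdot \iota(g)$ in $\mathcal{G}$}. Equivalently, the sequence
$$r_k := \Delta_k\cdot\bigl((f\cdot g)*\theta_k\bigr) - \Delta_k(f*\theta_k)\cdot \Delta_k(g*\theta_k)$$
must lie in the ideal $\mathcal{N}$, i.e.\ vanish on every fixed compact $K\subset \Q_p$ for all sufficiently large $k$. For $K\subset B(0,p^{\ell'})$ and $k\geq \ell'$ we have $\Delta_k\equiv 1$ on $K$, so on $K$ it suffices to compare $(f\cdot g)*\theta_k$ with $(f*\theta_k)(g*\theta_k)$. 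At a point $x\in K$ one rewrites $\bigl((f\cdot g)*\theta_k\bigr)(x) = p^{k}\langle f\cdot g, 1_{B(x,p^{-k})}\rangle$, invokes the definition $\langle f\cdot g,\phi\rangle = \lim_{m}\langle g,(f*\theta_m)\phi\rangle$ with $\phi = 1_{B(x,p^{-k})}$, and uses Proposition \ref{prod} to rewrite the second summand in the same form. The hypothesis that $f\cdot g$ is well defined then forces these two expressions to agree pointwise on $K$ for all $k$ large enough.

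The main obstacle I anticipate is controlling the double limit: the defining limit of $f\cdot g$ runs over the regularization parameter $m$, while the Colombeau representative is indexed by a single $k$, and what is required is not just convergence in $\mathcal{D}'$ but actual pointwise eventual equality on $K$. Here the non-Archimedean geometry is essential, since Proposition \ref{limit} provides not merely weak convergence of the regularization but \emph{exact} equality $\langle \Delta_k(f*\theta_k),\phi\rangle = \langle f,\phi\rangle$ for $k$ beyond a threshold depending only on the constancy and compactness parameters of $\phi$. Applied to the finitely many indicator test functions at scale $k$ that detect the values of $r_k$ on $K$, this p-adic stability collapses the double limit to a single parameter argument and gives eventual pointwise vanishing.

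Granted this compatibility lemma, Proposition \ref{associative} is immediate. Applying the lemma twice and using associativity of $\mathcal{G}$,
$$\iota\bigl((f\cdot g)\cdot h\bigr) = \bigl(\iota(f)\cdot \iota(g)\bigr)\cdot \iota(h) = \iota(f)\cdot\bigl(\iota(g)\cdot \iota(h)\bigr) = \iota\bigl(f\cdot(g\cdot h)\bigr),$$
and the injectivity of the linear embedding $\iota$ forces $(f\cdot g)\cdot h = f\cdot(g\cdot h)$ in $\mathcal{D}'$.
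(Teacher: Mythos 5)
Your strategy is the same one the paper itself relies on (the paper offers no detailed proof: it states the proposition immediately after the embedding theorem with ``Thus we obtain\dots''), so everything hinges on your compatibility lemma $\iota(f\cdot g)=\iota(f)\cdot\iota(g)$ in $\mathcal{G}$ --- and that lemma is false. Writing out your sequence at a point $x$ gives $r_k(x)=p^k\bigl(\lim_{m}\langle g,(f*\theta_m)1_{B(x,p^{-k})}\rangle-\langle g,(f*\theta_k)1_{B(x,p^{-k})}\rangle\bigr)$, so membership in the ideal $\mathcal{N}$ requires the defining limit over $m$ to be \emph{attained} already at $m=k$. Well-definedness of $f\cdot g$ gives only existence of that limit, not its stabilization, and Proposition \ref{limit} does not supply the missing uniformity: it controls the regularization of a fixed distribution against a fixed test function, not the $m$-dependence of $(f*\theta_m)$. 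Concretely, take $f(x)=|x|_p$ and $g=\delta_0$. Then $(f*\theta_m)(0)=p^{-m}/(1+p^{-1})\to 0$, so $f\cdot g=0$ and $\iota(f\cdot g)=0$; but the representative of $\iota(f)\cdot\iota(g)$ evaluated at $x=0$ equals $(f*\theta_k)(0)\,\theta_k(0)=p/(p+1)$ for every $k$, so $\iota(f)\cdot\iota(g)\neq 0$ in $\mathcal{G}$. The embedding preserves products only up to \emph{association} (weak convergence of the representative sequence to the product distribution), not up to equality modulo $\mathcal{N}$, and association is not compatible with multiplication, so ``apply the lemma twice'' cannot be run.

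The workable version of this argument aims lower: show that the single Colombeau element $\iota(f)\cdot\iota(g)\cdot\iota(h)$, whose representative on a compact set is $(f*\theta_k)(g*\theta_k)(h*\theta_k)$ for large $k$, converges weakly to $(f\cdot g)\cdot h$ whenever that distribution is well defined, and symmetrically to $f\cdot(g\cdot h)$; uniqueness of weak limits then gives the proposition, with no appeal to injectivity of $\iota$ at all. For pairs this association is a short computation in the non-Archimedean setting: for $k\ge -\ell(\phi)$ the function $(f*\theta_k)\phi$ is constant on balls of radius $p^{-k}$, whence $\int(f*\theta_k)(g*\theta_k)\phi\,d\m=\langle g,(f*\theta_k)\phi\rangle\to\langle f\cdot g,\phi\rangle$. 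For triples one is left with proving $\langle h,s_k\phi\rangle\to 0$ where $s_k=(f*\theta_k)(g*\theta_k)-(f\cdot g)*\theta_k$ tends to $0$ only weakly; that interchange of limits is the actual content of the proposition, and it is precisely what your write-up leaves unproved.
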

\setcounter{equation}{0}
\section{Study on $\mu_E * f = 1$}
In this section, 
we will   study the following equation 
\begin{equation}\label{EQ}
\mu_E * f = 1
\end{equation}
where  $0\le f\in L^1(\Q_p)$ with  $\int_{\Q_p}fd\m>0$, and
$E$ is a uniformly discrete subset of $\Q_p$. 
Suppose that $(E, f)$ is a solution of (\ref{EQ}). We will investigate
the density of $E$ and even   the distribution of $E$,  and the supports of the Fourier transforms 
$\widehat{\mu_E}$ and $\widehat{f}$.


We say that a uniformly discrete set  $E$ has a {\em bounded density} if the following limit exists for some $x_0\in \Q_p$	
$$
D(E):=\lim\limits_{k\to \infty}\frac{{\rm Card}(B(x_0,p^k)\cap E)}{\m (B(x_0,p^k))},
$$
which is called {\em the density} of $E$. Actually, if the limit exists for some $x_0 \in \Q_p$, then
it exists for all $x\in \Q_p$ and the limit is independent of $x$. In fact, for any $x_0, x_1\in \Qp$, when $k$ is  large enough such that  $|x_0-x_1|_p<p^k$, we have $B(x_0, p^k)=B(x_1, p^k)$.

For a function $g: \Q_p \rightarrow \mathbb{R}$, denote
$$
   \mathcal{N}_g :=\{x \in \Q_p: g(x)=0\}.
$$
If $g\in C(\Q_p)$ is a continuous function, then $\mathcal{N}_g $ is a closed set and $ \mathcal{Z}_g$ is the set of interior points of $\mathcal{N}_g $.
  But, the support of $g$ as a continuous function is equal to the support of $g$ as  a distribution.
  
  The following theorem gets together some properties of the solution $(E, f)$ of the equation (\ref{EQ}), which will be proved in this section. 
 

\begin{thm} \label{M1} Let $0\le f\in L^1(\mathbb{Q}_p)$ with  $\int_{\mathbb{Q}_p}fd\m>0$,
	and $E$ be a uniformly discrete subset of $\mathbb{Q}_p$. 
	Suppose that the equation {\rm (\ref{EQ})} is satisfied by $f$ and $E$. Then\\
	       		\indent {\rm (1)}\
	       		The support of $\widehat{f}$ is compact.\\
	   \indent {\rm (2)} The set  $\mathcal{Z}_{\widehat{\mu_E}}$  is bounded  and it is the union of  the  punctured ball $B(0, p^{-n_f})\setminus \{0\}$ and some spheres. \\
	\indent {\rm (3)}\ The density $D(E)$ exists and equals to 
	$1/\int_{\Q_p} f d\m$. Furthermore, 
	there exists an integer $n_f\in \mathbb{Z}$
	such that  for all integers $n \geq n_f$ we hav
	       $$
	       \forall \xi \in \Q_p, \quad \mbox{\rm Card}(E\cap B(\xi, p^{n})) = p^{n} D(E).
	        $$ 
	        	
	     
\end{thm}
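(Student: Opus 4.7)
\emph{Proof plan.} The natural first move is to pass to the Fourier side: since $\mu_E * f = 1$ is a well-defined distribution, Proposition \ref{Conv-Mul} gives
\[
\widehat{\mu_E}\cdot \widehat{f} \;=\; \widehat{1} \;=\; \delta_0.
\]
Because $f\in L^1(\Q_p)$, the function $\widehat{f}$ is continuous, with $\widehat{f}(0)=\int_{\Q_p} f\, d\m>0$. Define $n_f$ to be the smallest integer such that $\widehat{f}$ is nonzero throughout $B(0,p^{-n_f})$; this exists by continuity and positivity of $\widehat{f}$ at $0$. All three statements will be extracted from the product identity together with the structural information about $\ZE$ provided by Proposition \ref{zeroofE}.

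For part (2) the key step is to show $B(0,p^{-n_f})\setminus\{0\}\subset \ZE$. On this open punctured ball the product $\widehat{\mu_E}\cdot\widehat{f}$ agrees with $\delta_0$, which is zero there, while $\widehat{f}$ is continuous and nowhere vanishing; the remaining task is to divide out the nonzero continuous factor distributionally. My plan is to exploit the uniform continuity of $\widehat{f}$ on small compact subballs together with the ultrametric geometry of $\Q_p$ to approximate $\widehat{f}$ uniformly by locally constant (hence $\mathcal{E}$) functions; Proposition \ref{prod} then moves these approximants through the bracket, and a limit using Proposition \ref{limit} completes the division. Granting the inclusion, Proposition \ref{zeroofE} tells us $\ZE$ is a bounded union of spheres centered at $0$, so the remainder outside the punctured ball consists of only finitely many spheres $S(0,p^{k_i})$, proving (2). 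Part (1) falls out of the same division argument applied pointwise on $\Q_p\setminus\{0\}$: any $\xi\neq 0$ with $\widehat{f}(\xi)\neq 0$ must lie in $\ZE$, a bounded set, so $\operatorname{supp}(\widehat{f})$ is compact.

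For part (3), the inclusion established above implies that $\widehat{\mu_E}$ restricted to $B(0,p^{-n_f})$ is supported only at $\{0\}$. A decomposition argument (for any test function $\phi$ supported in $B(0,p^{-n_f})$, write $\phi=\phi(0)\,1_{B(0,p^\ell)}+\psi$ with $\ell\le -n_f$, so that $\psi$ is supported in the punctured ball $B(0,p^{-n_f})\setminus B(0,p^\ell)\subset\ZE$) shows this restriction is a scalar multiple of $\delta_0$, say equal to $c\,\delta_0$. Computing the product at $0$ gives $c\,\widehat{f}(0)\,\delta_0=\delta_0$, whence $c=1/\widehat{f}(0)=1/\int f\,d\m$. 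Substituting $\phi=1_{B(0,p^{-n})}$ for $n\ge n_f$ into formula (\ref{FofMu}) with $\xi=0$ yields $p^{-n}\Card(E\cap B(0,p^n))=c$, so the density $D(E)$ exists and equals $c$, and $\Card(E\cap B(0,p^n))=p^n D(E)$ for all $n\ge n_f$. For a general center $\xi$, I translate: $(E-\xi,f)$ also solves (\ref{EQ}) since $\mu_{E-\xi}*f(x)=(\mu_E*f)(x+\xi)=1$, and $n_f$ depends only on $f$; applying the previous count to $E-\xi$ produces $\Card(E\cap B(\xi,p^n))=p^nD(E)$ for every $n\ge n_f$.

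The main obstacle is the local division step in the second paragraph; once the implication ``$\widehat{\mu_E}\cdot\widehat{f}=0$ on an open set where $\widehat{f}$ is continuous and nonvanishing forces $\widehat{\mu_E}=0$ there'' is in hand, all three parts follow essentially by formal manipulation.
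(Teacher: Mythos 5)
Your architecture for parts (1) and (2) coincides with the paper's (pass to $\widehat{f}\cdot\widehat{\mu_E}=\delta_0$, prove $\{\widehat{f}\neq 0\}\setminus\{0\}\subset\ZE$, then invoke Proposition \ref{zeroofE}), but the one step you defer as ``the main obstacle'' --- dividing out the continuous nonvanishing factor $\widehat{f}$ --- is exactly where the work lies, and the mechanism you sketch does not go through. Approximating $\widehat{f}$ uniformly by locally constant functions and passing to the limit via Proposition \ref{limit} fails because $\widehat{\mu_E}$ is a genuine distribution, not a measure: it is only continuous along null sequences in $\mathcal{D}$, which require a \emph{fixed} parameter of constancy, whereas multiplying a test function by the merely continuous $\widehat{f}$, or by finer and finer locally constant approximants of it, produces test functions whose parameter of constancy degrades without bound; uniform smallness of the error then gives no control on $\langle\widehat{\mu_E},\cdot\rangle$. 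The paper's Proposition \ref{mainlem} avoids approximation altogether: given $\phi$ supported in $\mathcal{Z}_{\widehat{f}\cdot\widehat{\mu_E}}$ where $\widehat{f}\neq 0$, it sets $h=1/\widehat{f}$ on $\operatorname{supp}(\phi)$ and $0$ elsewhere, so that $h\in L^1(\Qp)$ and $h\cdot\widehat{f}=1_{\operatorname{supp}(\phi)}$, and then computes $\langle\widehat{\mu_E},\phi\rangle=\langle(h\cdot\widehat{f})\cdot\widehat{\mu_E},\phi\rangle=\langle h\cdot(\widehat{f}\cdot\widehat{\mu_E}),\phi\rangle=0$ using Proposition \ref{prod}, associativity in the Colombeau algebra (Proposition \ref{associative}), and the disjoint-support vanishing result (Proposition \ref{zeroproduct}). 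Some such exact-inversion device is needed; uniform approximation alone will not close this gap.

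Granting the inclusion $B(0,p^{-n_f})\setminus\{0\}\subset\ZE$, your part (3) is correct and takes a genuinely different, and shorter, route than the paper's. Your observation that a distribution supported at a single point of $\Qp$ is a scalar multiple of $\delta_0$ is valid here precisely because test functions are locally constant (your decomposition $\phi=\phi(0)1_{B(0,p^{\ell})}+\psi$ with $\operatorname{supp}(\psi)$ in the punctured ball is sound), the identification $c=1/\widehat{f}(0)$ via the product is a legitimate computation with the paper's definition of multiplication, the count $\Card(E\cap B(0,p^{n}))=p^{n}c$ follows from (\ref{FofMu}), and the translation $E\mapsto E-\xi$ cleanly transfers the count to arbitrary centers since $n_f$ depends only on $f$. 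The paper instead proves the uniform count (Proposition \ref{Structure}) by an induction on vanishing sums of $p^{n}$-th roots of unity via Lemma \ref{SchLemma}, and separately obtains $D(E)=1/\int_{\Qp}f\,d\m$ by integrating $\mu_E*f=1$ over large balls (Proposition \ref{fisrtprop}); that route uses only the inclusion (\ref{3.1}) and never needs to identify the local structure of $\widehat{\mu_E}$ at $0$, but it is longer. Your version would be a worthwhile simplification --- provided the division step of the previous paragraph is first repaired, since everything downstream depends on it.
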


Theorem \ref{M1} (1) and (2) will be proved in $\S 3.1$, the distribution of $E$ will be discussed in $\S 3.2$ and the equality
$D(E)=1/ \int_{\Q_p} f d \m$ will be proved in $\S 3.3$.



\subsection{Compactness of $\mbox{\rm supp} (\widehat{f})$ and structure of  $\mathcal{Z}_{\widehat{\mu_E}}$}

Our discussion is based on $\widehat{f}\cdot\widehat{\mu_E}=\delta_0$, which is implied by
 $f*\mu_E=1$ (see Proposition \ref{Conv-Mul}).


\begin{prop}\label{mainlem}
	Let $g\in C(\Q_p)$ be a continuous function and let $G\in \mathcal{D}'$ be a distribution. Suppose that the product $H=g\cdot G$ is well defined. 	Then
		$$\mathcal{Z}_{H} \subset \mathcal {N}_{g} \cup \mathcal{Z}_{G}.$$
		Consequently, if $f*\mu_E=1$ with $f\in L^1(\mathbb{Q}_p)$, then
		$\Q_p \setminus \{0\} \subset \mathcal {N}_{\widehat{f}} \cup \mathcal{Z}_{\widehat{\mu_E}},$
		which is equivalent to
	\begin{align}\label{mainprop}
	\{\xi\in\Q_p: \widehat{f}(\xi) \neq 0  \} \setminus \{0\} \subset \mathcal{Z}_{\widehat{\mu_E}}.
	\end{align}

\end{prop}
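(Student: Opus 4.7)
The second assertion reduces immediately to the first. Indeed, $f*\mu_E=1$ implies by Proposition~\ref{Conv-Mul}~(3) that $\widehat{f}\cdot\widehat{\mu_E}$ is well defined and equals $\widehat{1}=\delta_0$. Since $\operatorname{supp}(\delta_0)=\{0\}$, one has $\mathcal{Z}_{\delta_0}=\Q_p\setminus\{0\}$; applying the first inclusion with $g=\widehat{f}$ (which lies in $C(\Q_p)$ because $f\in L^1(\Q_p)$) and $G=\widehat{\mu_E}$ yields $\Q_p\setminus\{0\}\subset\mathcal{N}_{\widehat{f}}\cup\mathcal{Z}_{\widehat{\mu_E}}$, which is exactly \eqref{mainprop}. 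So the real content is the distributional inclusion $\mathcal{Z}_H\subset\mathcal{N}_g\cup\mathcal{Z}_G$.

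I would prove that inclusion by contraposition. Suppose $x_0\in\mathcal{Z}_H$ and $g(x_0)\ne 0$; the goal is to show $x_0\in\mathcal{Z}_G$. By continuity of $g$ and the definition of $\mathcal{Z}_H$, choose $n_0\in\mathbb{Z}$ small enough that $|g|$ is bounded below by a positive constant on the compact open ball $B:=B(x_0,p^{n_0})$ and that $\langle H,\phi\rangle=0$ for every $\phi\in\mathcal{D}$ supported in $B$. The key tool is the local invertibility of $g$ on $B$: writing $g_k:=g*\theta_k\in\mathcal{E}$ for the standard regularization, uniform continuity of $g$ on the compact set $B$ forces $g_k\to g$ uniformly on $B$, so for $k$ large $g_k$ is bounded below on $B$ and the function $\psi_k:=\phi\cdot(1_B/g_k)$ is a test function in $\mathcal{D}$ supported in $B$. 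Then by Proposition~\ref{prod} applied to $g_k\in\mathcal{E}$,
\begin{equation*}
\langle G,\phi\rangle \;=\; \langle G,\,g_k\psi_k\rangle \;=\; \langle g_k\cdot G,\,\psi_k\rangle,
\end{equation*}
whereas $\psi_k$ being supported in $B\subset\mathcal{Z}_H$ gives $\langle H,\psi_k\rangle=\lim_{m\to\infty}\langle G,g_m\psi_k\rangle=0$ for every $k$.

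The main obstacle is to bridge these two identities and conclude $\langle G,\phi\rangle=0$. A direct ``swap of limits'' is illegitimate: the test functions $\psi_k$ themselves depend on $k$ and have parameter of constancy $-k\to-\infty$, so $(\psi_k)$ is not a null sequence in $\mathcal{D}$ and the weak convergence $g_k\cdot G\to g\cdot G$ cannot be tested against $\psi_k$ pointwise. My plan is to reconcile the diagonal value $\langle g_k\cdot G,\psi_k\rangle=\langle G,\phi\rangle$ with the off-diagonal vanishing $\langle g\cdot G,\psi_k\rangle=0$ by exploiting the uniform convergence $g_m\to g$ on the fixed compact set $B$ together with the hypothesis that $H=g\cdot G$ is well defined (which provides the existence of all the limits $\lim_m\langle G,g_m\eta\rangle$ for $\eta\in\mathcal{D}$). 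Concretely, I would estimate $\langle G,(g_m-g_k)\psi_k\rangle$ for varying $m$, using the uniform smallness of $g_m-g_k$ on $B$ and the Colombeau-algebra framework of Section~2 to pass through the limits in a controlled way, forcing $\langle G,\phi\rangle=0$ for every $\phi\in\mathcal{D}$ supported in $B$, i.e.\ $x_0\in\mathcal{Z}_G$. This controlled limit passage is the technical heart of the argument and is precisely where the well-definedness hypothesis on the product plays its essential role.
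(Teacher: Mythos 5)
Your reduction of the second assertion to the first is exactly the paper's, and your overall strategy for the first assertion (localize to a ball $B$ where $|g|$ is bounded below and $H$ vanishes, then ``divide by $g$'') is also the right one. The problem is the step you yourself flag as the technical heart: it is not merely unfinished, and the route you sketch for it would fail. You propose to control $\langle G,(g_m-g_k)\psi_k\rangle$ by the uniform smallness of $g_m-g_k$ on $B$. But a general $G\in\mathcal{D}'$ is not continuous with respect to the sup norm on test functions whose parameters of constancy degenerate: for instance $G=\sum_{n\ge 1}p^{n}(\delta_{p^n}-\delta_{2p^n})$ is a legitimate distribution (any test function with parameter of constancy $\ell$ kills all terms with $n\ge -\ell$, so each pairing is a finite sum), yet it pairs unboundedly with uniformly small test functions of ever finer constancy. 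Since $(g_m-g_k)\psi_k$ has parameter of constancy tending to $-\infty$ as $k\to\infty$, no estimate of the kind you describe can close the argument; the hypothesis that $g\cdot G$ is well defined gives existence of the limits $\lim_m\langle G,g_m\eta\rangle$ for each \emph{fixed} $\eta$, but no uniformity across the family $\psi_k$.

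The paper closes this gap without regularizing $1/g$ at all. Given $\phi\in\mathcal{D}$ with $\operatorname{supp}(\phi)\subset\mathcal{Z}_{H}\setminus\mathcal{N}_{g}$, it sets $h:=1/g$ on $\operatorname{supp}(\phi)$ and $0$ elsewhere; $h$ is bounded with compact support, hence an $L^1$ distribution, and $h\cdot g=1_{\operatorname{supp}(\phi)}\in\mathcal{D}$. Then $\langle G,\phi\rangle=\langle G,1_{\operatorname{supp}(\phi)}\phi\rangle=\langle (h\cdot g)\cdot G,\phi\rangle$ by Proposition \ref{prod}, while $h\cdot H$ is well defined and equal to $0$ by Proposition \ref{zeroproduct}, since $\operatorname{supp}(h)\cap\operatorname{supp}(H)=\emptyset$. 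The associativity of multiplication coming from the Colombeau embedding (Proposition \ref{associative}) then yields $(h\cdot g)\cdot G=h\cdot(g\cdot G)=h\cdot H=0$, so $\langle G,\phi\rangle=0$. That associativity statement, applied to the un-regularized $h$, is precisely the ``controlled limit passage'' your plan is missing; you should invoke it directly rather than attempt a quantitative estimate on the diagonal sequence.
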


\begin{proof}
The second assertion follows directly from the first assertion
because $\widehat{f}\cdot\widehat{\mu_E}=\delta_0$ and $\mathcal{Z}_{\delta_0} =\Q_p\setminus \{0\}$.  

We now prove the first assertion.
It suffices to prove $\mathcal{Z}_{H} \setminus \mathcal {N}_{g} \subset \mathcal{Z}_{G}.$
	Take an arbitrary test function $\phi\in \mathcal{D}$ such that $$\operatorname{supp}(\phi) \subset \mathcal{Z}_{H} \setminus \mathcal {N}_{g} .$$ Since $\operatorname{supp}(\phi) \subset \{x:g(x)\neq 0 \}$, we can define the function
\begin{equation*}
	h(x)=
\begin{cases}
	\frac{1}{g(x)}, & \mbox{\rm for}~ x \in \operatorname{supp}\phi,\\
	0, & \text{elsewhere}.
\end{cases}
\end{equation*}
Since $g$ is continuous, it is bounded away from $0$ on $\operatorname{supp}\phi$. So, the function $h$  is  bounded and compactly supported. Hence it belongs to $L^1(\Q_p)$.
On the other hand,  we have $h(x)g(x)=1_{\operatorname{supp}(\phi)}(x)\in \mathcal{D}$. Thus
\begin{align*}
\langle G, \phi \rangle	
&=\langle G, 1_{\operatorname{supp}(\phi)} \cdot \phi  \rangle
=\langle 1_{\operatorname{supp}(\phi)}\cdot G,\phi \rangle
=\langle (h\cdot g) \cdot G, \phi \rangle
\end{align*}
where we have used Proposition \ref{prod} for the second equality. 
 Notice that $\operatorname{supp}(h)\subset \mathcal{Z}_{H}$. By Proposition 
\ref{zeroproduct}, $h\cdot H$ is well defined and $h\cdot H=0$.
By the associativity of the multiplication (see Proposition \ref{associative}), we get 
\begin{align*}
	\langle G, \phi \rangle	
&=\langle h\cdot H,\phi  \rangle	
=0.
\end{align*}
 Thus we have proved $\mathcal{Z}_{H} \setminus \mathcal {N}_{g} \subset \mathcal{Z}_{G}.$
\end{proof}


Notice that $\widehat{f}(0)=\int_{\Q_p} f d \m>0$ and  $\widehat{f}$ is a continuous function. 
It follows that  there exists a small ball  where $\widehat{f}$ is non-vanish. 
Let
\begin{align} \label{nf} 
	n_{f}:=\min \{n\in\mathbb{Z}: \widehat{f}(x)\neq 0,\text{ if }  x\in B(0,p^{-n}) \}.
\end{align}

\begin{prop}\label{CompactSupport}
The Fourier transform $\widehat{f}$   has compact support.
The set $\mathcal{Z}_{\widehat{\mu_E}}$ is bounded and
\begin{equation} \label{3.1}
 B(0,p^{-n_f})\setminus \{0\}\subset \ZE.
\end{equation}
\end{prop}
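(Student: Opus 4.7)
The proof should fall out almost immediately by combining two results already in hand: Proposition \ref{mainlem} (the inclusion (\ref{mainprop})) and Proposition \ref{zeroofE} (boundedness of $\ZE$ for any uniformly discrete $E$). The plan is to first derive (\ref{3.1}), then use the same inclusion together with the boundedness of $\ZE$ to bound the support of $\widehat{f}$.

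First I would record the starting point. Since $\mu_E * f = 1$, Proposition \ref{Conv-Mul}(3) gives $\widehat{f}\cdot\widehat{\mu_E}=\widehat{1}=\delta_0$, so by Proposition \ref{mainlem} we have
\begin{equation*}
\{\xi\in\Q_p: \widehat{f}(\xi)\neq 0\}\setminus\{0\}\subset \ZE.
\end{equation*}
For the claimed inclusion (\ref{3.1}), the definition of $n_f$ in (\ref{nf}) says exactly that $\widehat{f}$ is non-vanishing on $B(0,p^{-n_f})$. Therefore $B(0,p^{-n_f})\setminus\{0\}$ is contained in the set on the left-hand side above, and hence in $\ZE$.

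Next I would handle boundedness of $\ZE$. Because the hypotheses give that $E$ is uniformly discrete, Proposition \ref{zeroofE}(2) directly yields that $\ZE$ is bounded; in fact by (\ref{nE}) we have $\ZE\subset B(0,p^{n_E+1})$. Combined with the inclusion from the first step, this forces
\begin{equation*}
\{\xi\in\Q_p: \widehat{f}(\xi)\neq 0\}\subset \{0\}\cup \ZE\subset B(0,p^{n_E+1}),
\end{equation*}
so the set where $\widehat{f}$ fails to vanish is bounded.

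Finally, to conclude compactness of $\operatorname{supp}(\widehat{f})$, I would use that $\widehat{f}$ is a continuous function (Fourier transform of an $L^1$ function), so its support coincides with the closure of $\{\xi: \widehat{f}(\xi)\neq 0\}$. A bounded closed set in $\Q_p$ is compact (since $\Q_p$ is locally compact with compact balls $p^{-n}\Zp$), so $\operatorname{supp}(\widehat{f})$ is compact. There is no real obstacle here; the proposition is essentially an assembly of Propositions \ref{mainlem} and \ref{zeroofE} with the continuity of $\widehat{f}$, and the only point requiring any thought is to notice that one must pass from the non-vanishing set to its closure when speaking of $\operatorname{supp}(\widehat{f})$.
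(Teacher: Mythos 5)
Your proposal is correct and follows exactly the paper's own (very terse) argument: combine the inclusion (\ref{mainprop}) with the definition of $n_f$ to get (\ref{3.1}), and with the boundedness of $\ZE$ from Proposition \ref{zeroofE}(2) to get compactness of $\operatorname{supp}(\widehat{f})$. Your added remark about passing to the closure of the non-vanishing set via continuity of $\widehat{f}$ is a detail the paper leaves implicit, but it is the same proof.
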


\begin{proof} 	The boundedness of $\mathcal{Z}_{\widehat{\mu_E}}$ is already proved. See Proposition  \ref{zeroofE} (2). This together with (\ref{mainprop}) implies 
	the compactness of $\mbox{\rm supp} (\widehat{f})$. We get (\ref{3.1})
	immediately from (\ref{mainprop}).
\end{proof}


\subsection{Distribution of $E$}

 The uniformly discrete set $E$ involved in the equation (\ref{EQ}) shares the following uniform distribution property. 
 
\begin{prop}\label{Structure}
	The cardinality  of  $E\cap B(\xi,p^{n_f})$  is independent  of $\xi\in \mathbb{Q}_p$.
	Consequently,  the   set  $E$ admits  a bounded density $D(E)$.  Moreover,  for all integers $n\geq n_f$, we have
	\begin{equation}  \label{numberE}
 \forall \xi \in \Q_p, \quad	{\rm Card}(E\cap B(\xi,p^n))=  p^{n} D(E).
	\end{equation}
\end{prop}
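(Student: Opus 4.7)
The plan is to realize the counting function $\xi\mapsto\mathrm{Card}(E\cap B(\xi,p^{n_f}))$ as a convolution and then pass to Fourier transforms in order to exploit the key inclusion $B(0,p^{-n_f})\setminus\{0\}\subset\mathcal{Z}_{\widehat{\mu_E}}$ established in Proposition~\ref{CompactSupport}. Indeed, for every $\xi\in\mathbb{Q}_p$,
$$
\mathrm{Card}(E\cap B(\xi,p^{n_f}))=\sum_{\lambda\in E}1_{B(0,p^{n_f})}(\xi-\lambda)=(\mu_E\ast 1_{B(0,p^{n_f})})(\xi),
$$
and since $1_{B(0,p^{n_f})}\in\mathcal{D}$, this convolution of distributions is well defined and coincides with the locally constant function on the right.

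By Proposition~\ref{Conv-Mul} together with formula (\ref{FB}),
$$
\widehat{\mu_E\ast 1_{B(0,p^{n_f})}}=\widehat{\mu_E}\cdot\widehat{1_{B(0,p^{n_f})}}=p^{n_f}\,\widehat{\mu_E}\cdot 1_{B(0,p^{-n_f})}.
$$
Proposition~\ref{CompactSupport} says that $\widehat{\mu_E}$ vanishes on $B(0,p^{-n_f})\setminus\{0\}$, hence the above product is a Bruhat--Schwartz distribution supported in $\{0\}$. Such a distribution must be a scalar multiple $c\,\delta_0$ of the Dirac mass, because every $\phi\in\mathcal{D}$ is constant on some neighbourhood of $0$, and the differences $1_{B(0,p^{-k})}-1_{B(0,p^{-k-1})}$ are test functions supported away from $0$. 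Inverting the Fourier transform yields $\mu_E\ast 1_{B(0,p^{n_f})}\equiv c$, so $\mathrm{Card}(E\cap B(\xi,p^{n_f}))=c$ is independent of $\xi$.

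For the second part, any ball $B(\xi,p^n)$ with $n\geq n_f$ decomposes into $p^{\,n-n_f}$ disjoint balls of radius $p^{n_f}$, each containing exactly $c$ points of $E$, so
$$
\mathrm{Card}(E\cap B(\xi,p^n))=p^{\,n-n_f}c.
$$
Dividing by $\mathfrak{m}(B(\xi,p^n))=p^n$ gives the constant value $p^{-n_f}c$, which identifies the density $D(E)=p^{-n_f}c$ and yields the identity (\ref{numberE}) with $c=p^{n_f}D(E)$.

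The only subtle ingredient is the characterization of distributions supported at a single point on $\mathbb{Q}_p$ as scalar multiples of $\delta_0$; everything else (the convolution-Fourier identity, computing $\widehat{1_{B(0,p^{n_f})}}$, and decomposing large balls into smaller ones) is routine in view of the machinery already set up in Section~2. Verifying that the distributional convolution $\mu_E\ast 1_{B(0,p^{n_f})}$ really does coincide pointwise with the counting function is a direct unravelling of the definition, using that $1_{B(0,p^{n_f})}$ is a test function and that $E$ meets every compact set in a finite set.
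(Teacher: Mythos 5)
Your proof is correct, but it takes a genuinely different route from the paper. Both arguments hinge on the same key input, namely the inclusion $B(0,p^{-n_f})\setminus\{0\}\subset\mathcal{Z}_{\widehat{\mu_E}}$ from Proposition \ref{CompactSupport}, but they exploit it differently. The paper works at the level of character sums: fixing $\xi$ with $|\xi|_p=p^k$, it extracts from the vanishing of $\langle\widehat{\mu_E},1_{B(\eta,p^{-k})}\rangle$ at $\eta=p^{k-1},\dots,p^{n_f}$ a chain of vanishing sums of $p^m$-th roots of unity, and then applies Schoenberg's structure theorem (Lemma \ref{SchLemma}) inductively to equalize the counts ${\rm Card}(E\cap B(\cdot,p^{n_f}))$ over finer and finer subdivisions of $B(0,p^k)$. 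You instead stay entirely at the distributional level: you identify the counting function with $\mu_E*1_{B(0,p^{n_f})}$, observe that its Fourier transform $p^{n_f}\,\widehat{\mu_E}\cdot 1_{B(0,p^{-n_f})}$ is supported in $\{0\}$, and conclude it is a multiple of $\delta_0$, hence the counting function is constant. Your route is shorter and conceptually cleaner, and it makes transparent why $n_f$ is the right scale; the price is one extra ingredient not recorded in the paper's preliminaries, namely that a Bruhat--Schwartz distribution on $\Q_p$ supported at a single point is a scalar multiple of the Dirac mass. Your sketch of that fact (decomposing any test function as its value at $0$ times $1_{B(0,p^{-k})}$ plus a test function supported away from $0$, and noting that $\langle f,1_{B(0,p^{-k})}\rangle$ stabilizes in $k$) is sound and complete in the $p$-adic setting, where test functions are locally constant and no derivatives of $\delta_0$ exist. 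The paper's approach, by contrast, uses only Lemma \ref{SchLemma} and elementary counting, and its intermediate identities such as (\ref{ud1}) give slightly finer combinatorial information about how $E$ distributes among the sub-balls at each intermediate scale, though that extra information is not needed for the statement. The deduction of $D(E)$ and of (\ref{numberE}) from the constancy of ${\rm Card}(E\cap B(\xi,p^{n_f}))$ is the same in both arguments.
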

\begin{proof}
	For simplicity, we denote
	 $$E_n^\xi:=E\cap B(\xi,p^n),$$ 
	and write $E_n:=E\cap B(0,p^n)$  when $\xi=0$. It suffices to prove 
	 $$\forall \xi \in \Q_p,  \quad\Card(E_{n_f})= \Card(E_{n_f}^{\xi}).$$
	 
For any given $\xi \in \Q_p$, let $k=-v_p(\xi)$. If $k\leq n_f$, then $E_{n_f}=E_{n_f}^{\xi}$. So obviously $\Card(E_{n_f})= \Card(E_{n_f}^{\xi})$.

Now we suppose $k> n_f$. Then for any $\eta \in B(0,p^{k}) \setminus B(0,p^{n_f} )$, $$B(\eta,p^{-k})\subset B(0,p^{-n_f})\setminus \{0\}.$$
By (\ref{3.1}) in Proposition \ref{CompactSupport}, we have   $\langle \widehat{\mu_E}, 1_{B(\eta,p^{-k})} \rangle =0$, which
by (\ref{FofMu}), is equivalent to 
%
\begin{equation}\label{eq1}
\sum_{\lambda\in E_k}{\chi(\eta \lambda)}=0.
\end{equation}
Taking  $\eta= p^{k-1}$ in (\ref{eq1}), we have 
$
\sum_{\lambda\in E_k}{\chi(p^{k-1}\lambda)}=0.
$
Observe that  $$B(0,p^k)= \bigsqcup_{i=0}^{p-1} B(ip^{-k},p^{k-1})$$ and that the function  $\chi(p^{k-1}\cdot)$ is locally constant  on each ball of radius $p^{k-1}$. 
So we have 
$$
0=\sum_{\lambda\in E_k}{\chi(p^{k-1}\lambda)}=\sum_{i=0}^{p-1} {\chi\Big(\frac{i}{p}\Big)}{\rm Card}\Big(E_{k-1}^{\frac{i}{p^k}}\Big).
$$
Applying 
Lemma \ref{SchLemma}, we obtain
\begin{equation}
\label{ud1}
\mbox{\rm Card}\Big(E_{k-1}^{\frac{i}{p^k}}\Big)={\rm Card}\Big(E_{k-1}^{\frac{j}{p^k}}\Big), 
\quad \forall \ 0\le i,  j \le p-1.
\end{equation}

%


 Similarly,   taking $\eta=p^{k-2}$ in  (\ref{eq1}), we have 
%
$$
0=\sum_{0\le i, j \le p-1}{\chi\Big(\frac{i}{p^2}+\frac{j}{p}\Big)} {\rm Card}\Big(E_{k-2}^{\frac{i}{p^{k}}+\frac{j}{p^{k-1}}}\Big).
$$
Again, Lemma \ref{SchLemma} implies 
$${\rm Card}\Big(E_{k-2}^{\frac{i}{p^{k}}+\frac{j}{p^{k-1}}}\Big)={\rm Card}\Big(E_{k-2}^{\frac{i}{p^{k}}+\frac{m}{p^{k-1}}}\Big)\quad \forall \ 0\le i,  j,m \le p-1.$$ 

Since $$\sum_{j=0}^{p-1}{\rm Card}\Big(E_{k-2}^{\frac{i}{p^{k}}+\frac{j}{p^{k-1}}}\Big)={\rm Card}\Big(E_{k-1}^{\frac{i}{p^k}}\Big),$$ by (\ref{ud1}), we get
$$
{\rm Card}\Big(E_{k-2}^{\frac{i}{p^{k}}+\frac{j}{p^{k-1}}}\Big)
={\rm Card}\Big(E_{k-2}^{\frac{l}{p^{k}}+\frac{m}{p^{k-1}}}\Big), \quad   \forall\  0\le i, j, l,m \le p-1.
$$

We continue these arguments    for all $\eta= p^{k-1}, \cdots, p^{n_f}$. 
By induction, we have  
\begin{align}\label{equalnumber}
{\rm Card}(E_{n_f}^{\xi_1})
={\rm Card}(E_{n_f}^{\xi_2}), \quad   \forall \  \xi_1,  \xi_2 \in \mathbb{L}_{n_f} \cap B(0,p^k).
\end{align}
Since $|\xi|_p=p^{k}$, there exists $\xi^{\prime}\in \mathbb{L}_{n_f} \cap B(0,p^k)$, such that  
$E_{n_f}^{\xi}=E_{n_f}^{\xi^{\prime}}$. Thus  by (\ref{equalnumber}),  
$${\rm Card}(E_{n_f}^{\xi})
={\rm Card}(E_{n_f}^{\xi^{\prime}})={\rm Card}(E_{n_f}).$$ 

The formula (\ref{numberE}) follows immediately because
each ball of radius  $p^n$ with $ n\ge n_f$ is a disjoint union of  $p^{n-n_f}$ balls of radius $p^{n_f}$ so that
$$
   {\rm Card} (E_n) = p^{n-n_f} {\rm Card} (E_{n_f}).
$$  
\end{proof}

\subsection {Equality $D(E)=1/ \int_{\Q_p} f d\m$}
\begin{prop}\label{fisrtprop}
The  density $D(E)$ of $E$ satisfies
 $$D(E)=\frac{1}{\int_{\Q_p}f(x)dx}.
 $$
\end{prop}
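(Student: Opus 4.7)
The plan is to integrate the pointwise version of $\mu_E * f = 1$ over a large enough ball and exploit the exact counting formula from Proposition~\ref{Structure}.

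First I would translate the distributional identity $\mu_E * f = 1$ into the a.e. pointwise identity
\begin{equation*}
\sum_{\lambda \in E} f(x-\lambda) = 1 \quad \text{for } \m\text{-a.e. } x \in \Q_p.
\end{equation*}
This is legitimate because $f \ge 0$ and $\mu_E$ is a positive locally finite measure, so $\sum_{\lambda} f(x-\lambda)$ is a well-defined $[0,\infty]$-valued measurable function; it agrees as a distribution with $\mu_E * f = 1$, hence takes the value $1$ almost everywhere.

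Next, fix an integer $n \ge n_f$ and integrate the above identity over $B(0,p^n)$. The left-hand side gives $\m(B(0,p^n)) = p^n$. For the right-hand side, I would apply Tonelli (all terms nonnegative) to swap sum and integral:
\begin{equation*}
\int_{B(0,p^n)} \sum_{\lambda \in E} f(x-\lambda)\,dx
= \sum_{\lambda \in E} \int_{-\lambda + B(0,p^n)} f(y)\,dy
= \int_{\Q_p} f(y)\, N_n(y)\,dy,
\end{equation*}
where $N_n(y) := \mathrm{Card}\{\lambda \in E : y+\lambda \in B(0,p^n)\} = \mathrm{Card}(E \cap B(-y,p^n))$.

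Now I would invoke Proposition~\ref{Structure}: since $n \ge n_f$, the counting function $N_n(y)$ is identically equal to $p^n D(E)$, independent of $y$. Plugging this in collapses the right-hand side to $p^n D(E) \int_{\Q_p} f\,d\m$. Equating the two sides yields $p^n = p^n D(E) \int_{\Q_p} f\,d\m$, and dividing by $p^n$ gives $D(E) = 1/\int_{\Q_p} f\,d\m$, as required.

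There is essentially no serious obstacle beyond bookkeeping: the crux is the uniform cardinality statement~\eqref{numberE}, which has already been established, and Tonelli handles the interchange with no integrability worries because $f \ge 0$. The only point that deserves a line of justification is the passage from the distributional equation to the pointwise identity, but this is immediate from positivity of $\mu_E$ and $f$ together with the fact that $\mu_E * f$ is locally integrable (equal to the constant $1$).
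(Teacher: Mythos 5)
Your proof is correct, and it reaches the conclusion by a cleaner route than the paper. Both arguments start by integrating $\sum_{\lambda\in E}f(x-\lambda)=1$ over a ball $B(0,p^n)$ with $n\ge n_f$ and both ultimately rest on the uniform counting formula (\ref{numberE}) of Proposition \ref{Structure}; the difference is in how the resulting sum is handled. The paper splits $E$ into $E\cap B(0,p^n)$ and its complement, evaluates the near part as $\Card(E\cap B(0,p^n))\,(I-\epsilon_n)$ with $I=\int f\,d\m$ and $\epsilon_n=\int_{\Q_p\setminus B(0,p^n)}f\,d\m$, bounds the far part by $D(E)\,\m(B(0,p^n))\,\epsilon_n$ using a partition of $E\setminus B(0,p^n)$ into pieces lying in balls of radius $p^n$, and then lets $n\to\infty$ to kill the error terms. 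You instead apply Tonelli once to rewrite the whole sum as $\int_{\Q_p}f(y)\,\Card\bigl(E\cap B(-y,p^n)\bigr)\,dy$, and since the counting function is identically $p^nD(E)$ for $n\ge n_f$, you get the \emph{exact} identity $p^n=p^nD(E)\int f\,d\m$ for each such $n$, with no limiting argument or error bookkeeping. What your version buys is brevity and an exact finite-$n$ statement; what it costs is nothing, since the Tonelli interchange is fully justified by nonnegativity, and your passage from the distributional equation to the a.e.\ pointwise identity is the same (implicit) step the paper takes when it writes $\m(B(0,p^n))=\sum_{\lambda\in E}\int_{B(0,p^n)}f(x-\lambda)\,dx$ — indeed both (\ref{SpectralSet}) and (\ref{Tile}) are already stated pointwise in the applications, so this is unobjectionable.
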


\begin{proof}
By the integrability of $f$, the quantity
$$
\epsilon_n := \int_{\Q_p\setminus B(0, p^n)} f(x) dx
$$
tends to zero  as  the integer $n\to \infty$.
	Integrating the equality (\ref{EQ}) over the ball $B(0, p^n)$, we have
\begin{eqnarray}\label{ee}
  \m (B(0,p^n))
		=\sum_{\lambda \in E} \int_{B(0,p^n)} f(x-\lambda)dx.  
\end{eqnarray}
Now we split the sum in (\ref{ee}) into two parts, according to $\lambda \in E\cap B(0, p^n)$ or $\lambda \in E\setminus B(0, p^n)$.  Denote $I:=\int_{\Q_p} f d\m$.
For $\lambda \in E\cap B(0, p^n)$,  we have
\begin{eqnarray*}
\int_{B(0,p^n)} f(x-\lambda)dx =	\int_{B(0,p^n)} f(x)dx
	= I - \epsilon_n.
\end{eqnarray*}
It follows that 
\begin{equation}\label{ee1}
\sum_{\lambda \in E\cap B(0, p^n)}	\int_{B(0,p^n)} f(x-\lambda)dx
=   \mbox {\rm Card} (E_n ) \cdot (I-\epsilon_n).
\end{equation}

Notice that 
\begin{equation}\label{ee2}
    \int_{B(0,p^n)} f(x-\lambda)dx
    = \int_{B(-\lambda,p^n)} f(x)dx.
\end{equation}
For $\lambda \in E\setminus B(0, p^n)$, the ball  $B(-\lambda,p^n)$ is contained in $\Q_p\setminus B(0, p^n)$.
We partition the uniformly discrete set $E\setminus B(0, p^n)$ into $P_j$'s such that each $P_j$ is contained in a ball
of radius $p^n$ in $\Q_p\setminus B(0, p^n)$. Thus the integrals in (\ref{ee2}) for the $\lambda$'s in the same $P_j$ are equal.
 Let $\lambda_j$ be a representative of $P_j$. Then we have
\begin{eqnarray}
\sum_{\lambda \in E\setminus B(0,p^n)} \int_{B(0,p^n)} f(x-\lambda)dx
 =  \sum_j \mbox{\rm Card} (P_j) \cdot \int_{B(-\lambda_j,p^n)} f(x)dx. \nonumber
 \end{eqnarray}
 However, by (\ref{numberE}) in Proposition \ref{Structure},  $\mbox{\rm Card} (P_j) = D(E) \m(B(0, p^n))$  if $n \geq n_f$. Thus, for each  integer $n\geq n_f$,
 \begin{align}
 \sum_{e\in E\setminus B(0,p^n)} \int_{B(0,p^n)} f(x-\lambda)dx
       & =  D(E)\m(B(0, p^n)) \sum_j \int_{B(-\lambda_j,p^n)} f(x)dx  \nonumber \\
       & =  D(E) \m(B(0, p^n)) \int_{\Q_p \setminus B(0, p^n)} f(x) dx  \nonumber  \\
       &=   D(E) \m(B(0, p^n)) \epsilon_n. \label{ee3}
\end{align}

Thus from (\ref{ee}), (\ref{ee1}) and (\ref{ee3}),  we finally get
	$$
	\left|  \m(B(0, p^n)) - \mbox{\rm Card}(E\cap B(0, p^n))\cdot I \right|
      \le 2 D(E) \m(B(0, p^n)) \epsilon_n.
	$$
We conclude by dividing $\m(B(0, p^n))$ and then letting $n\to \infty$.
\end{proof}

\setcounter{equation}{0}
\section{Tiles are spectral sets}
The key for the proof of the following theorem
is the fact that the equation $\mu_E *f =1$ imposes that the support of the Fourier transform $\widehat{f}$ is compact (Proposition \ref{CompactSupport}). We also use the fact that  the Fourier transform of an integrable function with compact support is
uniformly locally constant (Proposition \ref{2case}). The Lebesgue density theorem (Proposition \ref{Lebesgue}) is used too. 
We recall that a point $x$ in a Borel set $\Omega$ satisfying the equality in Proposition \ref{Lebesgue} is called a {\em density  point} of $\Omega$.


\begin{thm}\label{propmain} Without  distinguishing sets which are equal modulo a set of zero Haar measure, 
	we have the following assertions :
	\begin{itemize}
		\item[(1)] If $\Omega$ is a spectral set in $\mathbb{Q}_p$, then it is bounded.
		\item[(2)] If $\Omega$ is a tile in $\mathbb{Q}_p$, then it is compact and open.
		\item[(3)] Tiles in $\Qp$ are spectral sets. 
	\end{itemize}
\end{thm}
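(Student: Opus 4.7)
The plan is to unify both parts of Theorem \ref{propmain} under the convolution equation $\mu_E * f = 1$ that has been analyzed in Section 3, and to exploit in each case the conclusion of Proposition \ref{CompactSupport} that $\widehat{f}$ must have compact support. The only external ingredient beyond Section 3 will be Theorem \ref{compactopen} (for assertion (3)), the Lebesgue density theorem (Proposition \ref{Lebesgue}, for assertion (1)), and the duality between compact support and uniform local constancy (Proposition \ref{2case}). I would actually dispatch (2) first, since it is cleanest, and then use (2) together with Theorem \ref{compactopen} to deduce (3) immediately.

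For assertion (2), the observation is that a tile with complement $T$ satisfies $\mu_T * 1_\Omega = 1$, so Proposition \ref{CompactSupport} applies with $f=1_\Omega$ and gives $\operatorname{supp}(\widehat{1_\Omega})$ compact. Now $\widehat{1_\Omega}$, being the Fourier transform of an integrable function, is continuous, and is hence an ordinary function of compact support; applying Proposition \ref{2case}(1) to $\widehat{1_\Omega}$ and invoking Fourier inversion, we find that $1_\Omega$ agrees almost everywhere with a uniformly locally constant function. Since this limit function takes only the values $0,1$ and has finite integral, it must be the indicator of a finite disjoint union of balls of the common radius equal to its parameter of constancy. Hence $\Omega$ equals, mod a null set, a compact open subset of $\Q_p$.

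For assertion (1), one works with $f=\m(\Omega)^{-2}|\widehat{1_\Omega}|^2$ and $\mu_\Lambda * f = 1$. Using $|\widehat{1_\Omega}|^2=\widehat{1_\Omega}\cdot\overline{\widehat{1_\Omega}}=\widehat{1_\Omega}\cdot\widehat{1_{-\Omega}}$ and Fourier inversion, we compute
\[
\widehat{f}(x) \;=\; \m(\Omega)^{-2}\,(1_{-\Omega}*1_\Omega)(x) \;=\; \m(\Omega)^{-2}\,\m\!\bigl(\Omega\cap(\Omega-x)\bigr).
\]
By Proposition \ref{CompactSupport} there is an integer $N$ with $\m(\Omega\cap(\Omega-x))=0$ whenever $|x|_p>p^N$. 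Suppose for contradiction that $\Omega$ is unbounded modulo null sets; then $\m(\Omega\setminus B(0,p^N))>0$, so by the Lebesgue density theorem (Proposition \ref{Lebesgue}) we may select density points $x_1\in\Omega\cap B(0,p^N)$ and $x_2\in\Omega\setminus B(0,p^N)$ with $|x_2-x_1|_p>p^N$. Choosing a sufficiently small common ball $B$ around $0$ and translating the density estimates at $x_1$ and at $x_2$, a direct intersection bound yields $\m(\Omega\cap(\Omega-(x_2-x_1)))>0$, contradicting the compact support of $\widehat{f}$. Thus $\Omega$ is bounded (mod null).

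For assertion (3), (2) says a tile $\Omega$ is equal modulo a null set to some compact open set $\Omega'$; the same $T$ tiles $\Q_p$ with $\Omega'$ (a.e. convolution identities are insensitive to null-set modifications), so Theorem \ref{compactopen} produces a spectrum for $\Omega'$, and this same set is a spectrum for $\Omega$ since spectrality depends only on the a.e.\ class. I expect the main subtlety to lie in the density-point argument in (1): one has to be careful that the two chosen density points can be separated by the required $p$-adic distance and that the small-ball density estimate translates correctly to force $\m(\Omega\cap(\Omega-x))>0$ at a forbidden $x$.
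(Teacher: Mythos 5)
Your proposal is correct and follows essentially the same route as the paper: both rest on Proposition \ref{CompactSupport} applied to the equations $\mu_T*1_\Omega=1$ and $\mu_\Lambda*|\widehat{1_\Omega}|^2=\m(\Omega)^2$, on the identity $\widehat{f}(\xi)=\m(\Omega)^{-2}\,\m(\Omega\cap(\Omega+\xi))$ combined with the two-density-points argument for (1), on Proposition \ref{2case} plus Fourier inversion for (2), and on Theorem \ref{compactopen} for (3). The only cosmetic differences are the order of the assertions and that the paper proves (1) directly, by showing $\overline{\Omega-\Omega}\subset\operatorname{supp}(\widehat{|\widehat{1_\Omega}|^2})$, rather than by your contradiction argument (which works, provided you choose $x_1$ as an arbitrary density point and then $x_2$ a density point with $|x_2|_p>\max(p^N,|x_1|_p)$, so as not to presuppose that $\Omega\cap B(0,p^N)$ has positive measure).
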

\begin{proof}
Since we do not distinguish sets which are equal modulo a set of zero Haar measure,   we can assume that all the points in $\Omega$ are density points of $\Omega$, according to the Lebesgue density theorem (Proposition \ref{Lebesgue}).

(1) Assume that $(\Omega, \Lambda)$ is a spectral pair in $\Q_p$, which is equivalent to
 $$
 \forall  \xi\in \widehat{\Qp}, \quad 
 \mu_\Lambda*|\widehat{1_\Omega}|^2 (\xi) =\m(\Omega)^2.$$
 By Proposition \ref{CompactSupport},  $\widehat{|\widehat{1_\Omega}|^2}$ has compact support.  Observe that  
 \begin{equation}\label{FFO}
\widehat{|\widehat{1_\Omega}|^2}(\xi)=1_\Omega * 1_{-\Omega} (\xi)= \int_{\mathbb{Q}_p} 1_\Omega(x)1_\Omega(x -\xi)dx=\m(\Omega\cap (\Omega+\xi)).
 \end{equation}
We claim
$$
\operatorname{supp}(\widehat{|\widehat{1_\Omega}|^2})=\overline{\Omega-\Omega} .
$$
Since the inclusion $\operatorname{supp}(\widehat{|\widehat{1_\Omega}|^2})\subset\overline{\Omega-\Omega}$ is obvious, we need only to show  
\begin{equation}
\label{O-O}
\overline{\Omega-\Omega} \subset \operatorname{supp}(\widehat{|\widehat{1_\Omega}|^2}).
\end{equation}
In fact, let $\xi\in \Omega-\Omega$.  Write  $\xi=z_1-z_2$ with  $z_1,z_2\in \Omega$. By (\ref{FFO}), we have
\begin{align*}
	\widehat{|\widehat{1_\Omega}|^2}(\xi)
	=\m(\Omega\cap(\Omega+z_1-z_2)) =\m((\Omega-z_1)\cap(\Omega-z_2)).
\end{align*}
Since $z_1,z_2$ are density  points of $\Omega$,  $0$ is a  density point of both $\Omega-z_1$ and $\Omega-z_2$.  This fact  implies 
$$\m((\Omega-z_1)\cap(\Omega-z_2)>0.$$
Thus we have proved $\Omega-\Omega\subset \operatorname{supp}(\widehat{|\widehat{1_\Omega}|^2})$.
Then (\ref{O-O}) follows. 
Since $\operatorname{supp}(\widehat{|\widehat{1_\Omega}|^2})$ is compact,  the set $\Omega$ is bounded.

(2) The main argument is the same as in (1). Assume that $(\Omega,T)$ is a tiling pair  in $\Qp$, which means 
 $$\mu_{T}*{1}_\Omega(x) =1, \quad \m\text {-a.e. } x\in\Qp.$$ By Proposition \ref{CompactSupport}, $\widehat{1_\Omega}$ has compact support. Then by  the first  assertion of Proposition  \ref{2case},  $1_\Omega$ is  almost uniformly locally constant, i.e. $\Omega$ is, up to a zero measure set, a union of balls with the same radius. Since $\Omega$ is of finite measure,  the number of these balls is finite.  So,  $\Omega$ is  almost compact open.

(3) It is an immediate  consequence  of   (2) and  Theorem \ref{compactopen}. 
\end{proof}

The above proof of the fact ``tiles are spectral sets" is partially based on Theorem \ref{compactopen} and partially on ``tiles are compact and open" (Theorem \ref{propmain} (2)).  
The proof of ``spectral sets are tiles" will be not based on 
 Theorem \ref{compactopen}. That means, we are not going to show
 that a spectral set is a compact open set up to a set of zero measure. But the boundedness of a spectral set (Theorem \ref{propmain} (1)) will be used.

\setcounter{equation}{0}
\section{Spectral sets are tiles}
In this section, we prove that a spectral set  in $\Qp$ is  a tile. 
Assume that $(\Omega,\Lambda)$ is a spectral pair.
The proof will be based on our knowledge on the set 
$\mathcal{Z}_{\widehat{\mu}_\Lambda}$. As we will see in the proof,
	a tiling complement can be easily constructed.


\begin{thm}\label{SpectralTile}
If $\Omega$ is a spectral set in $\mathbb{Q}_p$, then it is a tile.
\end{thm}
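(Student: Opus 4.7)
The plan is to use the two structural theorems already proved---Theorem \ref{M1} (which forces $\Lambda$ to be uniformly distributed and makes $\widehat{1_\Omega}$ compactly supported) and Theorem \ref{propmain}(1) (which bounds $\Omega$)---to realise $\Omega$ as an honest finite union of equal-radius balls inside a large ball, and then to extract a tiling partner from the sphere structure of $\Zm$ via the root-of-unity arithmetic of Section 2.3.

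First I rewrite \eqref{SpectralSet} as $\mu_\Lambda * f = 1$ with $f:=\m(\Omega)^{-2}|\widehat{1_\Omega}|^2$, which is nonnegative, integrable, and satisfies $\int f\,d\m = 1/\m(\Omega)>0$ by Plancherel. Theorem \ref{M1} then gives: $\widehat f = \m(\Omega)^{-2}(1_\Omega*1_{-\Omega})$ has compact support, $\Zm$ is bounded, and $\Lambda$ has density $\m(\Omega)$ with the counting property of Proposition \ref{Structure}. Combined with Theorem \ref{propmain}(1), one has $\Omega\subset B(0,p^N)$ for some $N$. Because the continuous function $\widehat{1_\Omega}$ has its non-vanishing set contained (away from $0$) in the bounded set $\Zm$, its support is compact; Fourier inversion together with Proposition \ref{2case}(1) applied to $\widehat{1_\Omega}$ then shows that $1_\Omega$ is uniformly locally constant modulo null sets. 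Hence, up to measure zero, $\Omega = \bigsqcup_{c\in C} B(c,p^{-M})$ for some integer $M$ and a finite set of centres $C\subset B(0,p^N)$ with $\Card(C)=\m(\Omega)\,p^M$.

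Next I pass to the finite cyclic $p$-group $G := B(0,p^N)/B(0,p^{-M})$. The spectrum $\Lambda$ reduces modulo $B(0,p^M)$ to a spectrum $\bar\Lambda$ of $C$ in $G$, and the sphere decomposition of $\Zm$ (Proposition \ref{zeroofE}) combined with the counting formula of Proposition \ref{Structure} provides a vanishing-sum identity $\sum_{c\in C}\chi(\eta c)=0$ at each radius scale $|\eta|_p$ belonging to the support of $\widehat{\mu_\Lambda}$. Applying Lemma \ref{zmod} (in particular the recursive decomposition obtained in the proof of part (3)) across these scales yields an explicit $p$-coset decomposition of $C$, from which a complementary set $T_0\subset G$ with $C\oplus T_0=G$ can be directly constructed.

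Finally, I lift $T_0$ to representatives in $B(0,p^N)$ and set $T:=T_0+p^{-N}\mathbb L$; since $\Qp = B(0,p^N)\oplus p^{-N}\mathbb L$ and $\Omega\oplus T_0 = B(0,p^N)$ modulo null sets, one has $\mu_T*1_\Omega=1$ a.e., as required. The main obstacle lies in the second step: coordinating the vanishing-sum identities supplied by Lemma \ref{zmod} across all radius scales in $\Zm$ so that the corresponding $p$-coset decompositions of $C$ assemble coherently into a tiling of the cyclic $p$-group $G$, all without invoking Theorem \ref{compactopen}. The author's remark that the complement is ``easily constructed'' indicates that this assembly becomes transparent once the sphere structure of $\Zm$ is in hand.
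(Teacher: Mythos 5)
Your opening reduction is fine, but the second step contains a genuine error that the rest of the proposal depends on. From $\mu_\Lambda * f=1$ with $f=\m(\Omega)^{-2}|\widehat{1_\Omega}|^2$, Proposition \ref{mainlem} and Proposition \ref{CompactSupport} control the non-vanishing set of $\widehat{f}=\m(\Omega)^{-2}\,1_\Omega*1_{-\Omega}$, i.e.\ of the autocorrelation $\xi\mapsto \m(\Omega\cap(\Omega+\xi))$; the inclusion (\ref{mainprop}) places \emph{that} set (minus the origin) inside the bounded set $\Zm$, which yields only that $\overline{\Omega-\Omega}$ is compact, hence that $\Omega$ is bounded. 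You have silently replaced $\widehat{f}$ by $\widehat{1_\Omega}$: nothing in Theorem \ref{M1} or its proof bounds the non-vanishing set of $\widehat{1_\Omega}$, and compact support of $\widehat{1_\Omega}$ is equivalent (via Proposition \ref{2case}) to $\Omega$ being almost compact open --- which is exactly the statement the paper explicitly declines to prove directly for spectral sets (it is only recovered \emph{after} the tiling property is established, by going through Theorem \ref{propmain}(2)). For a bounded measurable $\Omega$ that is not almost open, $\widehat{1_\Omega}$ need not have compact support, so your decomposition $\Omega=\bigsqcup_{c\in C}B(c,p^{-M})$ is unavailable and the entire reduction to the finite group $G$ collapses. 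Moreover, even granting that reduction, you acknowledge that the key assembly of the coset decompositions into $C\oplus T_0=G$ is an unresolved ``obstacle,'' so the argument is incomplete on its own terms.

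For comparison, the paper never represents $\Omega$ as a union of balls. After fixing $n_f$ and normalizing $\Omega\subset\Zp$, it splits the exponents $0\le n<n_f$ into $\mathbb{I}$ (those with $S(0,p^{-n})\subset\Zm$) and $\mathbb{J}$ (those with $S(0,p^{-n})\cap\Zm=\emptyset$), and takes $U=\{\sum_{j\in\mathbb{J}}\alpha_jp^j\}$ as the candidate complement inside $\Zp$. Disjointness of the translates $\Omega+u$, $u\in U$, follows because (\ref{mainprop}) forces $\widehat{f}$, i.e.\ the autocorrelation, to vanish on every $\mathbb{J}$-sphere, hence on $(U-U)\setminus\{0\}$. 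Covering follows from counting: each $i\in\mathbb{I}$ supplies a vanishing sum $\sum_{\lambda\in\Lambda_{n_f}}\chi(p^{-i}\lambda)=0$, so Lemma \ref{zmod}(3) gives $p^{\Card(\mathbb{I})}\mid\Card(\Lambda_{n_f})$, whence $\Card(U)\cdot\m(\Omega)\ge 1$ and therefore $\Omega\oplus U=\Zp$ up to a null set; one then tiles $\Qp$ by $U+\mathbb{L}$. If you want to salvage your write-up, you should abandon the compact-open reduction and work directly with the sphere dichotomy for $\Zm$ in this way.
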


\begin{proof}
	Suppose that $(\Omega, \Lambda)$ is a spectral pair. That means (see (\ref{SpectralSet})) $$\sum_{\lambda\in\Lambda}|\widehat{1_\Omega}|^2(x-\lambda)=\m(\Omega)^2, \quad   \m\hbox {-a.e.~} x\in \Qp.$$
	In other words,  $\mu_\Lambda * f =1$ where 
	$$f=|\widehat{1_\Omega}|^2/\m(\Omega)^2.$$
		Let $n_f$ be the integer defined by (\ref{nf}). Recall that
		$B(0, p^{-n_f})$
		is the biggest ball  centered at $0$ over which $\widehat{f}$ does not vanish. We first remark that 
		\begin{equation}\label{densitySpectral}
		\forall n\ge n_f, \quad 	{\rm Card} (\Lambda_{n})= p^n \m(\Omega)
		\end{equation}
		where
		$$\Lambda_{n}:=\Lambda\cap B(0,p^n).
		$$
In fact, 	Plancherel Theorem implies $\int_{\Q_p} f d\m =1/\m(\Omega)$. 
	So, by  Proposition \ref{fisrtprop},   the set $\Lambda$ is  of bounded density with density
	$$
	D(\Lambda)=\frac{1}{ \int_{\Qp} fd\m}=\m(\Omega).
	$$	
Thus (\ref{densitySpectral}) follows from  (\ref{numberE}) in Proposition \ref{Structure}.  
	

By Theorem \ref{propmain}, the spectral set 	
 $\Omega$ is  bounded, up to a Haar null measure set. 
Without loss of generality,  we assume that $\Omega\subset \Z_p$.


Recall that $\mathcal{Z}_{\widehat{\mu_\Lambda}}$ has the following properties: every sphere $S(0,p^{-n})$ either is contained in 
$\mathcal{Z}_{\widehat{\mu_\Lambda}}$ or does not intersect 
$\mathcal{Z}_{\widehat{\mu_\Lambda}}$ (Lemma \ref{zeroofE}); 
	all spheres $S(0,p^{-n})$ with  $n\geq n_f$
	are contained in $\mathcal{Z}_{\widehat{\mu_\Lambda}}$
	(see (\ref{3.1})). 
	Let
	\begin{eqnarray*}	
\mathbb{I}:&= &\left\{  0\leq n<n_f:  S(0,p^{-n})\subset\mathcal{Z}_{\widehat{\mu_\Lambda}} \right\}, \\ 
	\mathbb{J}:&=&\{ 0\leq n < n_f: S(0,p^{-n})\cap \mathcal{Z}_{\widehat{\mu_\Lambda}}=\emptyset\} .
		\end{eqnarray*}
	For $\mathcal{N}_{\widehat{f}}$, we claim 
	\begin{equation}\label{StoT1}
	\bigcup_{j\in \mathbb{J}} S(0,p^{-j})\subset \mathcal{N}_{\widehat{f}}.
	\end{equation}
	In fact, otherwise  for some $\xi\in S(0,p^{-j})$ with $j\in \mathbb{J}$ we have $\widehat{f}(\xi)\ne 0$. By (\ref{mainprop}) and Proposition \ref{zeroofE} (1),
		 $S(0,p^{-j}) \subset \mathcal{Z}_{\widehat{\mu_\Lambda}}$. 
		 This  contradicts the fact  $j\in \mathbb{J}$.

Observe that $\widehat{f}(\xi) = \m(\Omega \cap (\Omega+\xi))$ (see (\ref{FFO})) and that 
$$
(U-U)\setminus\{0\}\subset \bigcup_{j\in \mathbb{J}} S(0,p^{-j})$$
where $$
U:=\left\{\sum_{j\in \mathbb{J}} \alpha_j p^{j}, \alpha_j\in \{ 0, 1, \dots, p-1 \}  \right\}.
$$
Then from (\ref{StoT1}), we get
 \begin{equation}\label{StoT2}
 (U-U)\setminus\{0\}
     \subset \{\xi\in \Q_p: \m(\Omega \cap (\Omega+ \xi)=0\}.
 \end{equation}

We claim that  
	$\Omega$ is a tile of  $\Z_p$ with tiling complement $U$, so that $\Omega$ is a tile of  $\Q_p$ with tiling complement $U + \mathbb{L}$.
	
In fact, the disjointness (up to a set of zero measure) of $\Omega + \xi_1$ and $\Omega + \xi_2$ for any two distinct $\xi_1, \xi_2 \in U$ follows directly from (\ref{StoT2})
and the invariance of the Haar measure.

So to show that $\Omega$ is a tile of  $\Z_p$, it suffices to prove that the total measure of all $\Omega + \xi$ with
$\xi \in U$ is equal to 
\begin{align}\label{measure1}
 \m(\Omega+U)={\rm Card}(U)\cdot \m(\Omega)  = 1.
\end{align}
  

For $i \in \mathbb{I}$, we have $B(p^{-i}, p^{-n_f})\subset S(0, p^{-i})$ so  that 
 $B(p^{-i},p^{-n_f}) \subset \Zm$, which  implies 
\begin{align*}
		0=\langle \widehat{\mu_\Lambda}, 1_{B(p^{-i},p^{-N})} \rangle
		=\langle \mu_\Lambda, \widehat{1_{B(p^{-i},p^{-n_f})}} \rangle
		=p^{-n_f}\sum_{\lambda\in \Lambda_{n_f}} \overline{\chi(p^{-i} \lambda)}.
	\end{align*}
By Lemma \ref{zmod} (3),  $$p^{{\rm Card}(\mathbb{I})}\leq  {\rm Card} (\Lambda_{n_f}).$$
Then by the fact ${\rm Card}(U)=p^{{\rm Card}(\mathbb{J})}$ and (\ref{densitySpectral}),  we have
	$$
	{\rm Card}(U) \cdot \m(\Omega)
	= p^{{\rm Card}(\mathbb{J})} \cdot 
	\frac{{\rm Card} (\Lambda_{n_f})}{p^{n_f}} 
	\ge
	p^{{\rm Card}(\mathbb{J})} \cdot 
	\frac{p^{{\rm Card}(\mathbb{I})}}{p^{n_f}} 
=1.
	$$
We can now conclude (\ref{measure1}) because 
$\Omega +U$ is contained in $\mathbb{Z}_p$ so that
$\m(\Omega +U)\le 1$.

\end{proof}


\setcounter{equation}{0}
\section{Some remarks }\label{remarks}
\subsection{Dual Fuglede's conjecture}
Theorem \ref{mainthm} asserts  that all spectral sets (equivalently tiles) in $\Q_p$ are almost compact open. What is the topological structure of the corresponding spectra and tiling complements? This question has been answered by Theorem 1.2 of \cite{FFS} for the compact open spectral sets in $\Q_p$. 
Recall that  
the spectra and tiling complements of compact open spectral sets are infinite 
$p$-homogeneous  discrete sets $\Lambda$ (see \cite[Section 2.8]{FFS}) such  that    there exists an integer  $N$ such that 
\begin{align}\label{homogeneous}
  (\infty, N]\cap \Z  \subset  I_{\Lambda},  
\end{align}
where
$I_{\Lambda}:=\{v_p(x-y): x,y \in \Lambda  \text{ distinct}\}$,  called the set of admissible  $p$-orders of $\Omega$. 
  On the other hand, for any given infinite 
$p$-homogeneous  discrete sets $\Lambda$ satisfying (\ref{homogeneous}), we can construct two compact open sets $\Omega_1$ and $\Omega_2$ such that $(\Omega_1,\Lambda)$ is a spectral pair and $(\Omega_2,\Lambda)$ is a tiling pair. For details, we refer the readers to \cite[Theorems 1.2  and 5.1]{FFS}.
 So  the dual Fuglede's conjecture  holds in $\Q_p$ (see \cite{jp} for the dual Fuglede's conjecture in $\mathbb{R}^d$):
   {\em a subset of $\Qp$ is a spectrum if and only if it is a tiling complement.} 
\subsection{The density of $E$ satisfying  (\ref{F-equation}) in  $\Q_p^d$ }
Assume that   $E$ is a discrete subset in $\Q_p^{d}$ such that ${\rm Card}(E\cap K)<\infty$  for any compact subset $K$ of $\Q_p^d$.
Then  $\mu_E = \sum_{\lambda\in E} \delta_\lambda $
 defines a   discrete
 Radon measure in $\Q_p^d$. Remark that  $E$ is not necessarily assumed uniformly discrete. 
  Suppose that  $E$   satisfies   the  equation (\ref{F-equation}) with $f \in L^1(\Q_p^d)$ which is a non-negative integrable function such that $\int_{\Q_p^d}f d\m>0$.
  Then for each $x_0\in \Q_p^d$, the limit  
  $$D(E)=\lim\limits_{k\to \infty}\frac{{\rm Card}(B(x_0,p^k)\cap E)}{\m (B(x_0,p^k))}$$
  exists and 
 $$D(E)=\frac{1}{\int_{\Q_p^d}f d\m}.$$
This  is similar to the real case, see  \cite[Lemma 2]{Kol}.
 
\subsection{Tiles and spectral sets in $\Q_p^2$  which are not compact open.}
For a uniformly discrete set $E$ in the higher dimensional space $\Q_p^d$ with $d\geq 2$, the zero set of the Fourier transform of the measure $\mu_E$ is not necessarily  bounded.  In other words,
Proposition \ref{zeroofE} does not hold in $\Q_p^d$ with $d\geq 2$.   For example,  let $E=\{(0,0),(0,1),\cdots,(0,p-1) \}$ which is a finite subset of $\Q_p^2$. One can check that 
$$\ZE= \Q_p \times p^{-1} \Z_p^{\times}.
$$

Bounded tiles of $\Q_p^2$ are not necessarily almost compact open. 
Let us construct such a bounded tile of $\Q_p^2$. 
We partition $\Z_p$ into $p$ Borel sets of same Haar measure,
denoted $A_i$ $(i = 0, 1, ..., p-1)$. We assume that one of $A_i$ is not almost compact open.   For example, set $S= \bigcup_{n=1}^{\infty} B(p^n,p^{-n-1})$, which is a  union of countable disjoint balls.
Let 
\begin{eqnarray*}
A_0 &= & S \cup (B(1,p^{-1}) \setminus  (1+S)),  \\ 
A_1 &= &(B(0,p^{-1})\cup B(1,p^{-1}) )\setminus A_0,\\
A_i &= & B(i,p^{-1})\ \ \mbox{\rm for} \ \ 2\leq i\leq p-1. 
\end{eqnarray*}
 Then define $$
     \Omega:= \bigcup_{i= 0}^{p-1}  A_i \times B(i, p^{-1})\subset \Z_p\times \Z_p.
     $$
      The set $\Omega$  is not almost compact open, because any small ball centered at $(0,0)$ meets both  $\Omega$ and $(\Z_p\times \Zp\setminus \Omega)$ with positive measure. 
      But it is a tile of $\Q_p^{2}$
      with tiling complement
$$T=\mathbb{L}\times \mathbb{L}_{-1}.$$
Just like on $\mathbb{R}^d$(see \cite{F}), it can be  
         proved that $(\Omega, \mathbb{L}\times \mathbb{L}_{-1})$ is a tiling pair if and only if $(\Omega, \mathbb{L}\times \mathbb{L}_{1})$ is a spectral pair (see \cite{Kad}).

\subsection{Higher dimensional cases}
However, the situation changes in higher dimension. In fact,  
 it is  proved in \cite{Iso} that  there  exist  spectral sets which are not  tiles in $\mathbb{F}_p^5$ for all odd primes $p$ and in $\mathbb{F}_p^4$ for all odd primes $p$ such that $p\equiv 3 \mod 4$,
where $\mathbb{F}_p$ is the prime field with $p$ elements.   This implies that there  exist  compact open spectral sets which are not  tiles in $\mathbb{Q}_p^5$ for all odd primes $p$ and in $\mathbb{Q}_p^4$ for all odd primes $p$ such that $p\equiv 3 \mod 4$.   

For $d=3$, there  exists a spectral set in  the finite group $(\Z/8\Z)^3$   which is not  a tile \cite{KM2}. This implies that  there  exists   a compact open spectral set which is not  a tile in $\mathbb{Q}_2^3$.

 Fuglede's conjecture in  $\Q_p^2$ is open, even under the compact open assumption.

\end{document}